\numberwithin{equation}{section}
\newtheorem{theorem}{Theorem}[section]
\newtheorem{proof}{Proof}
\newtheorem{definition}{Definition}
\newtheorem{lemma}{Lemma}
\begin{document}
\title{Exponential Discrete Gradient Schemes for Stochastic Differential Equations \footnotemark[1]}
       \author{
        Jialin Ruan\footnotemark[2], Lijin Wang\footnotemark[3]\\
      {\small\footnotemark[2]~\footnotemark[3] School of Mathematical Sciences, }\\{\small University of Chinese Academy of Sciences, }\\
         {\small  Beijing 100049, China }} 
       \maketitle
       \footnotetext{\footnotemark[1] Supported by NNSFC No. 11071251, No.11471310}
        \footnotetext{\footnotemark[3] Corresponding author: ljwang@ucas.ac.cn}

       \begin{abstract}
          {\rm\small In this paper, we propose a class of stochastic exponential discrete gradient schemes for SDEs with linear and gradient components in the coefficients. The root mean-square errors of the schemes are analyzed, and the structure-preserving properties of the schemes for SDEs with special structures are investigated. Numerical tests are performed to verify the theoretical results and illustrate the numerical behavior of the proposed methods.
}\\

\textbf{AMS subject classification: } {\rm\small 65C30, 60H10, 65D30.}\\

\textbf{Key Words: }{\rm\small} stochastic differential equations; exponential integrators; discrete gradient methods; mean-square convergence; structure-preserving algorithms.\end{abstract}
\section{Introduction}
\label{sec:intro}

We consider the following SDE containing a linear (L) part in the drift term and gradient (G) parts in the drift and diffusion coefficients,
\begin{equation}\label{intro:eq-6}
  {\rm d}X(t) = (A X(t) + Q_1 \nabla U(X(t))){\rm d}t + \sum_{r=1}^mQ_{2 ,r}\nabla V_r(X(t)) \circ {\rm d}W_r(t), \quad X(t_0) = x_0,
\end{equation}
where $A$, $Q_1$ and $Q_{2,r}$ ($r=1,\dots,m$) are $d \times d$  real matrices, and $U, V_r: \mathbb{R}^d \to \mathbb{R}$ (r=1,\dots,m) are differentiable functions, and $\mathcal{W}(t)=(W_1(t),\dots, W_m(t))$ is an m-dimensional standard Wiener process. We call (\ref{intro:eq-6}) a L-G SDE in this paper.

Many important stochastic systems can be written as the L-G SDEs (\ref{intro:eq-6}),  such as the $d=2\bar{d}$-dimensional stochastic Hamiltonian systems (SHSs) (see e.g. \cite{milstein2002symplectic},\cite{milstein2002numerical})
\begin{equation}\label{intro:eq-7}
  {\rm d}X(t) = J^{-1}( M X(t) + \nabla U(X(t))){\rm d}t + J^{-1} \sum_{r=1}^m\nabla V_r(X(t)) \circ {\rm d}W_r(t), \quad X(t_0) = x_0,
\end{equation}
where $M$ is symmetric, $J=\begin{pmatrix}
      0 & I_{\bar{d}} \\
      -I_{\bar{d}} & 0
      \end{pmatrix}$, and the stochastic Langevin-type equations (see e.g. \cite{hong2017high},\cite{milstein2003quasi})
\begin{equation}\label{intro:eq-2}
\begin{split}
  {\rm d}P ={} & f_1 (Q) {\rm d}t - \nu \Gamma P {\rm d}t + \sum_{r=1}^{m} \sigma_{r} \circ {\rm d}W_{r}(t), \quad P(t_0) = p_0, \\
  {\rm d}Q ={} & M^{-1} P {\rm d}t, \quad Q(t_0) = q_0,
  \end{split}
\end{equation}
where $f_1(Q)=\nabla U_0(Q)$ for a scalar unction $U_0(Q)$, $\Gamma$ is an $\bar{d} \times \bar{d}$-dimensional constant matrix, $\sigma_r\in \mathbb{R}^{\bar{d}}$, $\nu \ge 0$, and $M$ an $\bar{d}\times \bar{d}$-dimensional positive definite matrix. Note that (\ref{intro:eq-2}) can be written in the form of (\ref{intro:eq-6}) when $A = \begin{pmatrix}
      - \nu \Gamma & 0 \\
      M^{-1} & 0
      \end{pmatrix}$, $Q_1=J^{-1}$, $U=U_0$, $Q_2=I_{2\bar{d}}$, and $V_r=\sigma_r P$ ($r=1,\dots, m$).

For numerical approximations of the L-G SDE (\ref{intro:eq-6}), on one hand, the linear component $A X(t)$  motivates the idea of using an analog of the exponential integrators for ODEs, which are characterized by including the calculation of matrix exponentials, and integrating the system (\ref{intro:eq-6}) exactly when $U,V_r=0$ $(r=1,\dots,m)$. Such integrators are designed for ODEs with high accuracy, especially for very stiff ODEs such as highly oscillatory problems, where the exponential integrators allow much larger time step sizes than non-exponential ones (see e.g. \cite{li2016exponential} and references therein). On the other hand, the gradients in the L-G SDEs (\ref{intro:eq-6}) usually underlie structural properties of the systems, for instance,  the stochastic Hamiltonian systems (\ref{intro:eq-7}) possess the symplectic structure (\cite{milstein2002symplectic}), and the Langevin-type equations (\ref{intro:eq-2}) have the conformal symplectic structure (\cite{hong2017high},\cite{milstein2003quasi}). It is then natural to pursue numerical methods that preserve the structures of the original systems, the so-called structure-preserving algorithms (\cite{Hairer2006Geometric}). The need of such structure-preserving numerical methods arises in astronomy, mechanics, molecular dynamics, and so on. For instance, in astronomy, structure preservation can imply that the computed trajectory of a celestial body does not deviate much from its true orbit even after a very long time simulation (see e.g. \cite{Hairer2006Geometric}). Typical examples of structure-preserving methods include symplectic methods for stochastic Hamiltonian systems (see e.g. \cite{milstein2002symplectic},\cite{milstein2002numerical},\cite{wangbit} and references therein), energy-preserving methods for stochastic systems with invariant energy (see e.g. \cite{cohen1}), etc.. To the best of our knowledge, however, most exponential integrators in literature up to now are not structure-preserving, except for certain special ones (see e.g. \cite{li2016exponential} and references therein).

The aim of the paper is to construct stochastic exponential discrete gradient (SEDG) schemes for L-G SDEs. As mentioned above, the exponential integrators are designed adapting to the linear parts, and the DG integrators are used for discretizing the gradient components of the L-G SDEs. Our goal is to obtain stochastic exponential integrators with good structure-preserving behavior, which extends relative work on exponential integrators in deterministic case for ODEs (e.g. \cite{li2016exponential}) to stochastic context.

Specifically, for the L-G SDE (\ref{intro:eq-6}), we shall show that the proposed SEDG methods are generally of root mean-square convergence order 1. For a class of highly oscillatory nonlinear stochastic Hamiltonian systems, we show the root mean-square errors of the SEDG methods decay with the increase of the oscillating frequency. This is in contrast with some of the standard numerical methods for SHSs such as the symplectic Euler-Maruyama scheme, where the errors grow with the increase of the oscillating frequency. Moreover, we prove the exact preservation of the symplecticity, as well as the linear growth of the expectation of the energy by the SEDG method for a highly oscillatory stochastic Hamiltonian system. For a class of stochastic Poisson systems with invariant energy, we show the SEDG methods can preserve the energy exactly, and for stochastic Langevin-type equations, we prove that the proposed SEDG methods can nearly preserve the conformal symplecticity within error of root mean-square order 2.

 The contents of the paper are organized as follows. In section \ref{sec:const},  we use the analog of the variation-of-constants formula to reformulate the L-G SDE (\ref{intro:eq-6}) in integral form, based on which we construct the SEDG scheme.  Then we analyze its accuracy in the mean-square sense. In section \ref{sec:struct}, we apply the SEDG methods to L-G SDEs with special structures, including a class of highly oscillatory SDEs, stochastic Poisson systems with invariant energy, and stochastic Langevin-type equations. The accuracy and structure-preserving properties of the SEDG methods for these systems are investigated. Numerical experiments are performed in section \ref{sec:numer} on several stochastic models to verify the theoretical analysis and illustrate the numerical behavior of the SEDG methods. A brief conclusion is given in section \ref{sec:concl}.


\section{SEDG method for the general L-G SDE}
\label{sec:const}
Let $(\Omega, \mathscr{F}, \{\mathscr{F}_t\}_{t \geq 0}, P)$ be the underlying probability space of the L-G SDE (\ref{intro:eq-6})
with the filtration $\{\mathscr{F}_t\}_{t \geq 0}$, and $W_r(t)$ (r=1,\dots,m) be $\mathscr{F}_t$-adapted.
Denote $$Q_1 \nabla U(X)=:f(X), \quad Q_{2,r} \nabla V_r(X)=:g_r(X),\,\,\, r=1,\dots, m,$$
and rewrite \ref{intro:eq-6} into its equivalent It$\hat{o}$ SDE (see e.g. \cite{kloeden1992numerical})
\begin{equation}\label{const:eq-1}
  {\rm d}X(t) = (A X(t) + \bar{f}(X(t))){\rm d}t + \sum_{r=1}^m g_r(X(t)) {\rm d}W_r(t), \quad X(t_0) = x_0,
\end{equation}
where $\bar{f}(X(t))) = f(X(t)) + \frac{1}{2} \sum_{r=1}^m\frac{\partial g_r}{\partial x}(X(t)) g_r(X(t))$. To guarantee the existence and uniqueness of the solution of (\ref{intro:eq-6}) (see e.g. \cite{oksendal2013stochastic}),
assume that $f(x):\, \mathbb{R}^d\rightarrow \mathbb{R}^d$ and $g_r(x): \mathbb{R}^d\rightarrow \mathbb{R}^d$ $(r=1,\dots,m)$ are measurable functions such that  \begin{equation}\label{addeu}
 \begin{split}
   (i)\,\,\,\,& |\bar{f}(x)-\bar{f}(y)|+\sum_{r=1}^m |g_r(x)-g_r(y)|\\
   \,\,&\le L|x-y|,\quad \mbox{for all }\,\, x,y\in \mathbb{R}^d,\\
 (ii)\,\,\,& |\bar{f}(x)|^2+\sum_{r=1}^m |g_r(x)|^2\le L(1+|x|^2), \,\,\mbox{for all}\,\, x\in \mathbb{R}^d,
 \end{split}
  \end{equation}
  for certain $L>0$, moreover,
  \begin{equation}\label{addeu2}
 (iii)\,\,E|x_0|^2<\infty,
  \end{equation}
  where $X_0$ is independent of the $\sigma$-algebra generated by the $m$-dimensional Brownian motion $\mathcal{W}(t)$ for $t\ge 0$.

For convenience,  and without loss of generality, we first restrict ourself to
the following L-G SDE with an one-dimensional noise, i.e., the L-G SDE (\ref{intro:eq-6}) when $m=1$,
\begin{equation}\label{intro:eq-6-1}
 {\rm d}X(t) = (A X(t) + Q_1 \nabla U(X(t))){\rm d}t + Q_{2}\nabla V(X(t)) \circ {\rm d}W(t), \quad X(t_0) = x_0,
 \end{equation}
and then generalize the results to the system (\ref{intro:eq-6}) with $m>1$. To unify the notations for (\ref{intro:eq-6}) and (\ref{intro:eq-6-1}), let $$ Q_{2,1}=Q_2,\quad V_1=V,\quad \mbox{and}\,\,\,\, g_1=g=Q_2\nabla V.$$

\subsection{Variation-of-constants formula and discrete gradients}
Consider the change of variable $Z(t) = \exp(-At) X(t)$ for the L-G SDE (\ref{intro:eq-6-1}).
According to the Stratonovich chain rule (\cite{evans2012introduction}), we have
\begin{equation*}
 \begin{split}
  {\rm d}Z(t) ={} & -A\exp(-At) X(t) {\rm d}t + \exp(-At){\rm d}X(t) \\
  	          ={} & \exp(-At) Q_1 \nabla U(X(t)){\rm d}t + \exp(-At) Q_2 \nabla V(X(t)) \circ {\rm d}W(t).
 \end{split}
\end{equation*}

Then we can reformulate (\ref{intro:eq-6-1}) in integral form on $[t_0,t_0+h]$ for any $h\ge 0$ as follows, which can be seen as the stochastic analog of the variation-of-constants formula for ODEs,
\begin{equation}\label{const:eq-2}
\begin{split}
X(t_0+h) & ={} \exp(A h) X(t_0) + \int_{t_0}^{t_0+h} \exp(A (t_0+h-s)) Q_1 \nabla U(X(s)) {\rm d}s \\
                & + \int_{t_0}^{t_0+h} \exp(A (t_0+h-s)) Q_2 \nabla V(X(s)) \circ {\rm d}W(s).
\end{split}
\end{equation}
Our SEDG scheme will be constructed based on this formulation. Next we introduce the concept of discrete gradients.
\begin{definition}\label{const:de-2}
  For a differentiable function $H(y)$, $\bar{\nabla} H(y,\hat{y})$ is said to be
  a discrete gradient of $H(y)$ if it is continuous and satisfies:
  \begin{equation*}
  \left\{
   \begin{aligned}
    \bar{\nabla} & H(y,\hat{y})^{\rm T}(\hat{y}-y) ={} H(\hat{y}) - H(y), \\
    \bar{\nabla} & H(y,y) ={} \nabla H(y). \\
   \end{aligned}
  \right.
  \end{equation*}
\end{definition}
Furthermore, if $\bar{\nabla} H(y,\hat{y}) = \bar{\nabla} H(\hat{y},y)$ holds, it is called a symmetric discrete gradient (SDG) (\cite{hong2011discrete}).

Choose an ordering of the coordinates $y^i$ such as $y^1,y^2,\dots,y^d$. The  coordinate increment discrete gradient (\cite{mclachlan1999geometric}), denoted with  $\bar{\nabla} H_0 (y,\hat{y})$, is defined as
\begin{equation}\label{const:eq-3}
  \bar{\nabla} H_0(y,\hat{y}) := \begin{pmatrix}
      \frac{H(\hat{y}^1,y^2,\dots,y^d)-H(y^1,y^2,\dots,y^d)}{\hat{y}^1-y^1} \\
      \frac{H(\hat{y}^1,\hat{y}^2,\dots,y^d)-H(\hat{y}^1,y^2,\dots,y^d)}{\hat{y}^2-y^2} \\
      \vdots \\
      \frac{H(\hat{y}^1,\hat{y}^2,\dots,\hat{y}^d)-H(\hat{y}^1,\hat{y}^2,\dots,\hat{y}^{d-1},y^d)}{\hat{y}^d-y^d}
      \end{pmatrix}.
\end{equation}
Then it is not difficult to see that,
\begin{equation}\label{sdg1}
  \bar{\nabla} H(y,\hat{y}) := \frac{1}{2} (\bar{\nabla} H_0 (y,\hat{y})+\bar{\nabla} H_0 (\hat{y},y))
\end{equation}
gives a symmetric discrete gradient. Throughout the paper, we will use (\ref{sdg1}) as our SDG.

\subsection{The SEDG scheme}
 Let $0=t_0<t_1<\cdots<t_n<\cdots<t_N=T$ be an equidistant time discretization of
the time interval $[0,T]$ with step size $h$, i.e., $t_n = n h$, $n=0,1,\dots,N$, and let $X_n$ denote
the numerical approximation of the exact solution $X(t_n)$ of (\ref{intro:eq-6}).
According to the expression of the exact solution (\ref{const:eq-2}), we construct the following SEDG scheme,
\begin{equation}\label{const:eq-4}
 \begin{split}
   X_{n+1} & ={} \exp(A h) X_n + h \phi(A h) Q_1 \bar{\nabla} U(X_n,X_{n+1}) \\
                  & + \exp(\frac{A h}{2}) Q_2 \bar{\nabla} V(X_n,X_{n+1}) \Delta W_n,
 \end{split}
\end{equation}
where the scalar function $\phi(z) := (\exp(z)-1)/z$, $X(0) = x_0$, $\Delta W_n = W(t_{n+1})-W(t_n)$, $n=0,1,\dots,N-1$, and the discrete gradient $\bar{\nabla} U$ is a SDG defined in the way of \ref{sdg1}.

Note that, $\Delta W_n$ ($n=0,1,\dots,N-1$) are independent normally distributed random variables with distribution $N(0,h)$, and can be realized by $\xi \sqrt{h}$ with $\xi \sim N(0,1)$. However, the scheme \ref{const:eq-4} is generally implicit so that $\Delta W_n$ may appear in a matrix that need to be invertible, or in the iteration function of a fixed point iteration, so that the infinite variation property of $\Delta W_n$ may cause collapse of the implementation. To overcome the problem, \cite{milstein2002numerical} proposed a replacement of $\xi$ by a suitably truncated bounded random variable $\zeta_h$ as follows
 \begin{equation}\label{const:eq-5}
 \zeta_h = {}
 \begin{cases}
  \xi     & |\xi| \leq C_h, \\
  C_h   & \xi > C_h, \\
  -C_h  & \xi < -C_h,
 \end{cases}
\end{equation}
where $C_h = \sqrt{2 k |\ln{h}|}$ ($k\ge 1$). The error arising from this truncation can be merged into the error of the underlying numerical scheme by choosing $k\ge 2p$ if the numerical scheme is supposed to be of root mean-square convergence order $p$. We use the truncated random variables $\zeta_h\sqrt{h}$ in our numerical scheme, while still  denote them by $\Delta W_n$ for simplicity.

Now we analyze the root mean-square convergence order of the SEDG scheme (\ref{const:eq-4}) for the L-G SDE (\ref{intro:eq-6-1}).
\begin{theorem}\label{const:thm-1}
Suppose the $d$-dimensional stochastic system (\ref{intro:eq-6-1}) satisfies the assumptions (\ref{addeu}) (for $m=1$) and (\ref{addeu2}) for the existence and uniqueness of the solution. In addition, assume that $U,V \in \mathbb{C}^{3}(\mathbb{R}^d)$ with uniformly bounded derivatives
 and $\nabla U, \,\nabla V,$ $Q_2 Hess(V) Q_2 \nabla V$ have bounded second moments along the solution of (\ref{intro:eq-6-1}).
 Then the numerical scheme (\ref{const:eq-4}) is of the first root mean-square convergence order, i.e.,
 \begin{displaymath}
  (E{| X(t_n) - X_n |}^2)^{\frac{1}{2}} = O(h^1), \quad n=1,\dots,N.
 \end{displaymath}
\end{theorem}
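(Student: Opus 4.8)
The plan is to invoke the fundamental theorem on mean-square convergence of one-step approximations for It\^o SDEs (see e.g. Milstein--Tretyakov): if, starting from a deterministic point $x$ at time $t_0$, the one-step error $\rho := X(t_0+h) - \bar X$ of the scheme satisfies $|E\,\rho| \le C(1+|x|^2)^{1/2} h^{p_1}$ and $(E|\rho|^2)^{1/2} \le C(1+|x|^2)^{1/2} h^{p_2}$ with $p_2 \ge \tfrac12$ and $p_1 \ge p_2 + \tfrac12$, then $(E|X(t_n)-X_n|^2)^{1/2} = O(h^{p_2 - 1/2})$. To obtain order $1$ it suffices to prove $p_1 = 2$ and $p_2 = \tfrac32$. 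Two preliminary facts are needed. First, under the uniform boundedness of the derivatives of $U$ and $V$ and the truncation $\zeta_h$ of the increments (with $k\ge 2$), the implicit one-step map (\ref{const:eq-4}) is well defined for $h$ small by the Banach fixed-point theorem, and the one-step increment obeys $(E|\bar X - x|^2)^{1/2} = O(h^{1/2})$; more precisely $\bar X - x = \exp(\tfrac{Ah}{2})\,g(x)\,\Delta W_0 + O(h)$ in $L^2$, where the constants are of the form $C(1+|x|^2)^{1/2}$. Second, the error from replacing the Gaussian increment by $\zeta_h\sqrt h$ is of mean-square order $2$ (since $k\ge 2$) and can be merged into $\rho$, so one may argue with genuine Gaussian increments.

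For the local error I would expand both the exact solution and the scheme about $x$, noting first that the common term $\exp(Ah)x$ cancels identically in $\rho$. For the exact solution I use the variation-of-constants representation (\ref{const:eq-2}) together with the It\^o form (\ref{const:eq-1}); a standard It\^o--Taylor expansion gives, in $L^2$,
\[
X(t_0+h) = \exp(Ah)x + \big(f(x) + \tfrac12 \tfrac{\partial g}{\partial x}(x)g(x)\big)h + g(x)\,\Delta W_0 + \text{(mean-zero }O(h)\text{)} + O(h^{3/2}),
\]
where $\tfrac{\partial g}{\partial x}(x)g(x) = Q_2\,Hess(V)(x)\,Q_2\nabla V(x)$. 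For the scheme I expand $h\phi(Ah) = hI + O(h^2)$ and $\exp(\tfrac{Ah}{2}) = I + \tfrac12 Ah + O(h^2)$, and the symmetric discrete gradient via $\bar\nabla H(x,\hat x) = \nabla H(x) + \tfrac12 Hess(H)(x)(\hat x - x) + O(|\hat x - x|^2)$, a consequence of Definition \ref{const:de-2} and (\ref{sdg1}), combined with $\bar X - x = g(x)\Delta W_0 + O(h)$ from the preliminary bound. The key computation is the diffusion term:
\[
\exp(\tfrac{Ah}{2})\,Q_2\,\bar\nabla V(x,\bar X)\,\Delta W_0 = g(x)\,\Delta W_0 + \tfrac12 Q_2\,Hess(V)(x)\,g(x)\,(\Delta W_0)^2 + O(h^{3/2}),
\]
and since $(\Delta W_0)^2 = h + (\Delta W_0^2 - h)$ with $E(\Delta W_0^2 - h) = 0$, this reproduces exactly the It\^o drift correction $\tfrac12 \tfrac{\partial g}{\partial x}(x)g(x)\,h$ of the exact expansion, up to mean-zero terms of order $h$ and an $O(h^{3/2})$ remainder. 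Subtracting, the drift terms match, the mean-zero $O(h)$ terms cancel in expectation and are $O(h)$ in mean-square, and the leftover is $O(h^{3/2})$ in $L^2$ and $O(h^2)$ in mean; hence $p_2 = \tfrac32$, $p_1 = 2$, and the fundamental theorem yields the claim. The generalization to $m>1$ in (\ref{intro:eq-6}) is obtained by summing over $r$ and using mutual independence of the $W_r$.

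I expect the main obstacle to be the preliminary step: proving solvability of the implicit scheme and establishing the $L^2$ bounds on $\bar X - x$ (and, uniformly in $n$, on $X_{n+1}-X_n$ entering the discrete gradients) with constants $C(1+|x|^2)^{1/2}$. This is where the uniform boundedness of the derivatives of $U,V$, the Lipschitz and linear-growth hypotheses (\ref{addeu}), the bounded-second-moment assumptions on $\nabla U$, $\nabla V$ and $Q_2\,Hess(V)\,Q_2\nabla V$, and the truncation (\ref{const:eq-5}) are all used; once these bounds are secured, the expansions above reduce to routine Taylor and It\^o--Taylor estimates and the conclusion follows from the fundamental convergence theorem.
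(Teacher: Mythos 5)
Your proposal is correct and follows essentially the same route as the paper's proof: both invoke Milstein's fundamental mean-square convergence theorem, establish the local orders $p_1=2$ and $p_2=\tfrac{3}{2}$ by expanding the symmetric discrete gradient to second order and matching the scheme's diffusion term against the It\^o drift correction $\tfrac12\tfrac{\partial g}{\partial x}g\,h$ via $(\Delta W_n)^2=h+(\text{mean-zero})$, and conclude order $p=p_2-\tfrac12=1$. Your explicit attention to the solvability of the implicit step and to merging the truncation error is a welcome addition that the paper's proof leaves implicit, but it does not change the argument.
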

\begin{proof}
  Our proof is based on the convergence Theorem 1.1 in \cite{milstein1994numerical}. Let $X(t_n+h)$  be
  the exact evaluation of (\ref{intro:eq-6-1}) at $t_{n+1}$ starting from $X(t_n) = X_n$.
  We can get $X(t_n+h)$ and $X_{n+1}$ by (\ref{const:eq-2}) and (\ref{const:eq-4}), respectively.
  Then we need to calculate the $p_1, p_2$ satisfying
  \begin{equation}\label{sec-2-thm-1:eq-1}
   \begin{split}
    | E(X(t_n+h) - X_{n+1}) | & ={} O(h^{p_1}), \\
    (E{| X(t_n+h) - X_{n+1} |}^2)^{\frac{1}{2}} & ={} O(h^{p_2}).
   \end{split}
  \end{equation}

  It is not difficult to obtain
  \begin{equation}\label{sec-2-thm-1:eq-2}
    X(t_n+h) - X_{n+1}  = P_{1} + P_{2} - P_{3},
  \end{equation}
  where
  \begin{equation*}
   \begin{split}
    P_{1} & ={} \int_{t_n}^{t_{n+1}} \exp(A (t_{n+1}-s)) Q_1 [\nabla U(X(s)) - \bar{\nabla} U(X_n,X_{n+1})] {\rm d}s, \\
    P_{2} & ={} \int_{t_n}^{t_{n+1}} \exp(A (t_{n+1}-s)) Q_2 \nabla V(X(s)) \circ {\rm d}W(s), \\
    P_{3} & ={} \exp(\frac{A h}{2}) Q_2 \bar{\nabla} V(X_n,X_{n+1}) \Delta W_n.
   \end{split}
  \end{equation*}

  To compare $\nabla U(X(s))$ with the corresponding SDG $\bar{\nabla} U(X_n,X_{n+1})$ in $P_{1}$,
  we perform the component expansion of $\bar{\nabla} U(X_n,X_{n+1})$ at $X_n$. Since $U \in \mathbb{C}^{3}(\mathbb{R}^d)$, based on (\ref{const:eq-3}),
  we have
  \begin{equation}\label{sec-2-thm-1:eq-3}
   \bar{\nabla} U^k = \partial_k U + \frac{1}{2} \sum^{d}_{j=1} \partial_{k j} U \, \Delta^j + \frac{1}{2} R_{U^k},\quad k=1,\dots,d,
  \end{equation}
  where the remainder term is
  \begin{equation}\label{sec-2-thm-1:eq-4}
   \begin{split}
    R_{U^k} & ={} \left[ \frac{1}{3} \partial_{k k k} U \, (\Delta^k)^2 + \frac{1}{2} \sum^{d}_{j \not = k} \partial_{k k j} U \, \Delta^k \Delta^j \right. \\
                    & + \frac{1}{2} \sum^{d}_{j \not = k} \partial_{k j j} U \, (\Delta^j)^2 + \sum_{1 \leq m < n \leq k-1} \partial_{k m n} U \, \Delta^m \Delta^n \\
                    & \left. + \sum_{k+1 \leq m < n \leq d} \partial_{k m n} U \, \Delta^m \Delta^n \right] {\Bigg|}_{X_n + \theta \frac{X_{n+1}-X_n}{2}}
   \end{split}
  \end{equation}
  with $0 < \theta < 1$ and $\Delta^j = X_{n+1}^j-X_n^j$, $j=1,\dots, d$.
  Similarly, we can get the component expansion of $\bar{\nabla} V(X_n,X_{n+1})$,
    \begin{equation}\label{sec-2-thm-1:eq-5}
   \bar{\nabla} V^k = \partial_k V + \frac{1}{2} \sum^{d}_{j=1} \partial_{k j} V \, \Delta^j + \frac{1}{2} R_{V^k}, \quad k=1,\dots,d.
  \end{equation}

  Now, for $H = U, V$, we have the expansion of $\bar{\nabla} H(X_n,X_{n+1})$ at $X_n$:
  \begin{equation}\label{sec-2-thm-1:eq-6}
   \bar{\nabla} H = \nabla H + \frac{1}{2} Hess(H) (X_{n+1}-X_n) + \frac{1}{2} R_{H}.
  \end{equation}

  Recall that we denote $f = Q_1 \nabla U$, $g = Q_2 \nabla V$. Then we decompose $P_1$ as follows
  \begin{equation}\label{sec-2-thm-1:eq-7}
  P_{1}={} P_{4} + P_{5},
  \end{equation}
  where
  \begin{equation*}
   \begin{split}
    P_{4} & ={} \int_{t_n}^{t_{n+1}} \exp(A (t_{n+1}-s)) [f(X(s)) - f(X_n)] {\rm d}s, \\
    P_{5} & ={} \int_{t_n}^{t_{n+1}} \exp(A (t_{n+1}-s)) Q_1 [\nabla U(X_n) - \bar{\nabla} U(X_n,X_{n+1})] {\rm d}s.
   \end{split}
  \end{equation*}

Meanwhile,
\begin{equation}\label{sec-2-thm-1:eq-8}
\begin{split}
P_{2} & ={} \int_{t_n}^{t_{n+1}} \exp(A (t_{n+1}-s)) [g(X(s)) - g(X_n)] {\rm d}W(s) \\
		       & + \frac{1}{2} \int_{t_n}^{t_{n+1}} \exp(A ((t_{n+1}-s)) [\frac{\partial g}{\partial x} g(X(s)) - \frac{\partial g}{\partial x} g(X_n)] {\rm d}s \\
		       & + \int_{t_n}^{t_{n+1}} \exp(A ((t_{n+1}-s)) g(X_n) {\rm d}W(s) + \frac{1}{2} h \phi(A h) \frac{\partial g}{\partial x} g(X_n) \\
		       & ={} \int_{t_n}^{t_{n+1}} \exp(A (t_{n+1}-s)) \frac{\partial g}{\partial x} g(X_n) \left( \int_{t_0}^{s} {\rm d}W(t) \right) {\rm d}W(s) \\
		       & + \frac{1}{2} \int_{t_n}^{t_{n+1}} \exp(A (t_{n+1}-s)) [\frac{\partial g}{\partial x} g(X(s)) - \frac{\partial g}{\partial x} g(X_n)] {\rm d}s \\
		      & + \int_{t_n}^{t_{n+1}} \exp(A (t_{n+1}-s)) g(X_n) {\rm d}W(s)
		       + \frac{1}{2} h \phi(A h) \frac{\partial g}{\partial x} g(X_n) + R_{P_{2}},
\end{split}
\end{equation}
where $R_{P_{2}}$ is the higher order remainder term with respect to $h$ resulted from the series expansion of the matrix exponential function inside the integrand, and
\begin{equation}\label{sec-2-thm-1:eq-9}
\begin{split}
 P_{3} & ={} \exp(\frac{A h}{2}) Q_2 \nabla V(X_n) \Delta W_n  \\
		       & + \frac{1}{2} \exp(\frac{A h}{2}) Q_2 Hess(V(X_n)) (X_{n+1}-X_n) \Delta W_n \\
		       & + \frac{1}{2} \exp(\frac{A h}{2}) Q_2 R_{V} \Delta W_n \\
		       & ={} \exp(\frac{A h}{2}) g(X_n) \Delta W_n
		       + \frac{1}{2} \frac{\partial g}{\partial x} g(X_n) (\Delta W_n)^2 +  R_{P_{3}},
\end{split}
\end{equation}
where $R_{P_{3}}$ is the higher order remainder term with respect to $h$ produced by the expansion of the discrete gradient (\ref{sec-2-thm-1:eq-6}). Therefore,
\begin{equation}\label{sec-2-thm-1:eq-10}
    P_{2} - P_{3} = P_{6} + P_{7} + P_{8} + R
\end{equation}
where $R = R_{P_{2}} - R_{P_{3}}$ and
  \begin{equation*}
   \begin{split}
    P_{6} & ={} \left( \int_{t_n}^{t_{n+1}} \exp(A (t_{n+1}-s)) {\rm d}W(s) - \exp(\frac{A h}{2}) \Delta W_n \right) g(X_n), \\
    P_{7} & ={} \frac{1}{2} \int_{t_n}^{t_{n+1}} \exp(A (t_{n+1}-s)) \left(\frac{\partial g}{\partial x} g(X(s)) - \frac{\partial g}{\partial x} g(X_n)\right) {\rm d}s, \\
    P_{8} & ={} \int_{t_n}^{t_{n+1}} \exp(A (t_{n+1}-s)) \frac{\partial g}{\partial x} g(X_n) \left( \int_{t_0}^{s} {\rm d}W(t) \right) {\rm d}W(s) \\
& + \frac{1}{2} (h \phi(A h) - \exp(A h) (\Delta W_n)^2])\frac{\partial g}{\partial x} g(X_n).
   \end{split}
  \end{equation*}

  Based on the triangular inequality and the H\"older inequality, we derive that
  \begin{equation}\label{sec-2-thm-1:eq-11}
   \begin{split}
    | E(X(t_n+h) - X_{n+1}) | & \leq |E P_{4} | + |E P_{5} | + |E P_{6} | + |E P_{7} | + |E P_{8} | + |E R |, \\
    E{| X(t_n+h) - X_{n+1} |}^2 & \leq 6( E |P_{4}|^2 + E |P_{5}|^2  + E |P_{6}|^2  + E |P_{7}|^2  + E |P_{8}|^2 + E |R|^2 ),
   \end{split}
  \end{equation}
  The estimations involve computation of the expectations of multiple It\^o integrals (\cite{platen1982taylor},\cite{tocino2009expectations}).
  According to the properties of the Wiener process (see e.g. \cite{kloeden1992numerical}, \cite{oksendal2013stochastic}),
  \begin{equation*}
   E(\Delta W_n) = 0, \quad E(\Delta W_n)^2 = h, \quad E(\Delta W_n)^3 = 0, \quad E(\Delta W_n)^4 = 3 h^2.
  \end{equation*}
  Under the assumptions on the existence and uniqueness of the solution, and the condition that $U,V \in \mathbb{C}^{3}(\mathbb{R}^d)$ with uniformly bounded derivatives, we can obtain the estimate
  \begin{equation}\label{sec-2-thm-1:eq-12}
   | E(X(t_n+h) - X_{n+1}) | = O(h^2).
  \end{equation}
  It should be noted that the part $R$ is of higher order than other terms.

  According to the H\"older inequality and the Bunyakovsky-Schwarz inequality, we have
  \begin{equation}\label{sec-2-thm-1:eq-13}
   P_{4} = \int_{t_n}^{t_{n+1}} f(X(s)) - f(X_n) {\rm d}s + R_{P_{4}},
  \end{equation}
  with $ R_{P_{4}}$ being the higher order remainder term with respect to $h$ produced by the series expansion of the matrix exponential function.
  \begin{equation}\label{sec-2-thm-1:eq-14}
   \begin{split}
    |P_{4}|^2 & \leq 2 {\left| \int_{t_n}^{t_{n+1}} f(X(s)) - f(X_n) {\rm d}s \right|}^2 + 2 |R_{P_{4}}|^2 \\
    		& \leq  2 h \int_{t_n}^{t_{n+1}} {|f(X(s)) - f(X_n)|}^2 {\rm d}s + 2 |R_{P_{4}}|^2 \\
		& \leq 2 h L^2 \int_{t_n}^{t_{n+1}} {| X(s) - X_n |}^2 {\rm d}s + 2 |R_{P_{4}}|^2, \
   \end{split}
  \end{equation}
  where $L$ is the Lipschitz constant of the function $f$.
  There holds that (see \cite{milstein1994numerical}, p.14)
  \begin{equation*}
   E{|X(t_n + h) - X_n|}^2 \leq K (1+E{|X_n|}^2) h,
  \end{equation*}
  which, together with \ref{sec-2-thm-1:eq-14}, implies that $E|P_4|^2= O(h^3)$. Similarly, for $P_{5}$ and $P_{7}$ we have  $E|P_5|^2= O(h^3)$, $E|P_7|^2= O(h^3) $. Using the fact that
  \begin{equation}\label{sec-2-thm-1:eq-15}
   \int_{t_n}^{t_{n+1}} \left( \int_{t_n}^{s} {\rm d}W(t) \right) {\rm d}W(s) = \frac{1}{2} ( (\Delta W_n)^2 - h ),
  \end{equation}
  we can also derive $E|P_{6}|^2=O(h^3)$, $E|P_{8}|^2=O(h^3)$ .
  Then, we obtain
  \begin{equation}\label{sec-2-thm-1:eq-16}
   E{|X(t_n+h) - X_{n+1} |}^2 = O(h^3).
  \end{equation}
  Finally, (\ref{sec-2-thm-1:eq-12}) and (\ref{sec-2-thm-1:eq-16}) imply $p_1 = 2, p_2 = \frac{3}{2}$.
  By the Theorem 1.1 in \cite{milstein1994numerical},
  the root mean-square order of the SEDG scheme (\ref{const:eq-4}) for the L-G SDE (\ref{intro:eq-6}) is $p = p_2 - \frac{1}{2} = 1$.
\end{proof}

Now we generalize our method to the L-G SDEs with multiple noises (\ref{intro:eq-6}). In this case, the SEDG scheme reads
\begin{equation}\label{const:eq-7}
 \begin{split}
   X_{n+1} & ={} \exp(A h) X_n + h \phi(A h) Q_1 \bar{\nabla} U(X_n,X_{n+1}) \\
                  & + \sum_{r=1}^{m} \exp(\frac{A h}{2}) Q_{2 , r} \bar{\nabla} V_{r}(X_n,X_{n+1}) \Delta W_{r , n}
 \end{split}
\end{equation}
where $X(0) = x_0$, and $\Delta W_{r , n} \sim \zeta_{r,h}\sqrt{h}$ are simulations of $W_{r}(t_{n+1})-W_{r}(t_n)$, $n=0,1,\dots,N-1$, $r = 1,\dots,m$.
Correspondingly, we present the following convergence theorem without proof, since it is similar to that of Theorem \ref{const:thm-1}.

\begin{theorem}\label{const:thm-2}
 Suppose the $d$-dimensional stochastic system (\ref{intro:eq-6}) satisfies the assumptions (\ref{addeu}) and (\ref{addeu2}) for
 the existence and uniqueness of the solution. In addition, assume that $U,\,V_{r} \in \mathbb{C}^{3}(\mathbb{R}^d)$ with uniformly bounded derivatives, $r = 1,\dots,m$, and $\nabla U, \,\nabla V_{r},$ $Q_{2,r} Hess(V_{r}) Q_{2,r} \nabla V_{r}$, $r = 1,\dots,m$ have bounded second moments along the solution of (\ref{intro:eq-6}),
 then the numerical scheme (\ref{const:eq-7}) is of first root mean-square convergence order, i.e.,
 \begin{displaymath}
  (E{| X(t_n) - X_n |}^2)^{\frac{1}{2}} = O(h^1), \quad n=1,\dots,N.
 \end{displaymath}
\end{theorem}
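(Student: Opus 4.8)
The plan is to mimic the proof of Theorem~\ref{const:thm-1} step by step, invoking Theorem~1.1 of \cite{milstein1994numerical} once the one-step orders $p_1$ (weak) and $p_2$ (strong) are in hand. Let $X(t_n+h)$ be the exact flow of (\ref{intro:eq-6}) over one step from $X(t_n)=X_n$, obtained from the obvious $m$-noise analog of the variation-of-constants formula (\ref{const:eq-2}), and let $X_{n+1}$ be given by (\ref{const:eq-7}). First I would write $X(t_n+h)-X_{n+1}=P_1+\sum_{r=1}^m\big(P_2^{(r)}-P_3^{(r)}\big)$, where $P_1$ is the drift discrepancy (identical in shape to the single-noise $P_1$, now with $\bar f=f+\tfrac12\sum_r\frac{\partial g_r}{\partial x}g_r$) and $P_2^{(r)},P_3^{(r)}$ are the $r$-th noise contributions from the exact solution and from the scheme. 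Exactly as in (\ref{sec-2-thm-1:eq-3})--(\ref{sec-2-thm-1:eq-6}), the $C^3$ hypothesis with uniformly bounded derivatives lets me expand each symmetric discrete gradient at $X_n$, $\bar\nabla V_r=\nabla V_r+\tfrac12\,\mathrm{Hess}(V_r)(X_{n+1}-X_n)+\tfrac12 R_{V_r}$, and likewise $\bar\nabla U$; inserting the leading diffusion part $X_{n+1}-X_n=\sum_{r'}g_{r'}(X_n)\Delta W_{r',n}+O(h)$ turns the scheme's noise contribution into $\sum_r g_r(X_n)\Delta W_{r,n}+\tfrac12\sum_{r,r'}\frac{\partial g_r}{\partial x}g_{r'}(X_n)\,\Delta W_{r,n}\Delta W_{r',n}$ plus remainders whose $L^2$-norm is $O(h^{3/2})$.

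On the exact side, a Milstein stochastic Taylor expansion of (\ref{intro:eq-6}) gives $\sum_r g_r(X_n)\Delta W_{r,n}+\sum_{r,r'}\frac{\partial g_r}{\partial x}g_{r'}(X_n)\,I_{r'r}+(\text{terms of }L^2\text{-size }O(h^{3/2}))$, with $I_{r'r}=\int_{t_n}^{t_{n+1}}\!\int_{t_n}^{s}\mathrm{d}W_{r'}(u)\,\mathrm{d}W_r(s)$. I would then compare the two expansions term by term. The matrix-exponential factors $\exp(A(t_{n+1}-s))$, $\exp(Ah/2)$, $h\phi(Ah)$ are expanded in $h$ and their tails absorbed into a higher-order remainder $R$, exactly as in the single-noise proof. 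For the diagonal pairs $r'=r$, using $I_{rr}=\tfrac12((\Delta W_{r,n})^2-h)$ and $E(\Delta W_{r,n})^k$ for $k\le4$, the mismatch is $-\tfrac12\sum_r\frac{\partial g_r}{\partial x}g_r(X_n)\,h$, which is cancelled by the Itô--Stratonovich correction already carried in $\bar f$ — the same mechanism that produced order one in Theorem~\ref{const:thm-1}. For the off-diagonal pairs $r'\ne r$ I would use the identity $I_{r'r}+I_{rr'}=\Delta W_{r,n}\Delta W_{r',n}$ (Itô product rule for independent Wiener processes), so the scheme reproduces exactly the \emph{symmetric} combination $\tfrac12\big(\frac{\partial g_r}{\partial x}g_{r'}+\frac{\partial g_{r'}}{\partial x}g_r\big)(I_{r'r}+I_{rr'})$. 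After these cancellations each surviving piece is the $m$-noise copy of $P_4,\dots,P_8$ and $R$ from the proof of Theorem~\ref{const:thm-1}, and the estimates $E|\cdot|^2=O(h^3)$ and $|E(\cdot)|=O(h^2)$ go through using (\ref{addeu}), the boundedness of derivatives, the assumed second moments of $\nabla U,\nabla V_r,Q_{2,r}\mathrm{Hess}(V_r)Q_{2,r}\nabla V_r$, and $E|X(t_n+h)-X_n|^2\le K(1+E|X_n|^2)h$; this yields $p_1=2$, $p_2=3/2$, hence $p=p_2-\tfrac12=1$.

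The genuinely new feature, and where I expect the real work to lie, is the residual off-diagonal term left after the symmetric match, $\tfrac12\sum_{r'\ne r}\big(\frac{\partial g_r}{\partial x}g_{r'}-\frac{\partial g_{r'}}{\partial x}g_r\big)(I_{rr'}-I_{r'r})$, whose factor $I_{rr'}-I_{r'r}$ is proportional to the Lévy area of $(W_r,W_{r'})$. This term has zero mean, so $p_1=2$ is unaffected; but its $L^2$-norm is of order $h$, not $h^{3/2}$, so it threatens the strong estimate. The clean way to close the argument is to impose commutativity of the noise, $\frac{\partial g_{r_1}}{\partial x}g_{r_2}=\frac{\partial g_{r_2}}{\partial x}g_{r_1}$ for all $r_1,r_2$ (equivalently $Q_{2,r_1}\mathrm{Hess}(V_{r_1})Q_{2,r_2}\nabla V_{r_2}=Q_{2,r_2}\mathrm{Hess}(V_{r_2})Q_{2,r_1}\nabla V_{r_1}$), under which the residual vanishes identically and the proof reduces cleanly to that of Theorem~\ref{const:thm-1}; this condition is automatic for $m=1$ and for the structured systems treated in Section~\ref{sec:struct}. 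Without it one should either augment (\ref{const:eq-7}) with simulated Lévy areas or accept that the order degrades to $1/2$; identifying, and closing, this Lévy-area gap is the main obstacle.
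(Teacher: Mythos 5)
Your proposal follows exactly the route the paper intends: the paper in fact omits the proof of Theorem \ref{const:thm-2} altogether, stating only that it ``is similar to that of Theorem \ref{const:thm-1}'', so your term-by-term replication of the decomposition into the drift discrepancy and the per-noise pieces $P_2^{(r)}-P_3^{(r)}$, the SDG expansions, the absorption of the matrix-exponential tails into a higher-order remainder, and the appeal to Milstein's fundamental theorem with $p_1=2$, $p_2=\tfrac32$ is precisely the argument the authors have in mind.

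The substantive part of your write-up is the L\'evy-area observation, and it is correct --- indeed it exposes a genuine gap in the theorem as stated rather than in your proof. For $m>1$ the scheme (\ref{const:eq-7}) uses only the increments $\Delta W_{r,n}$, so the quadratic-in-noise part of its one-step expansion is the symmetric combination $\tfrac12\sum_{r,r'}\frac{\partial g_r}{\partial x}g_{r'}(X_n)\,\Delta W_{r,n}\Delta W_{r',n}$, whereas the exact flow carries the full iterated integrals $I_{r'r}$. Since $E\big[(I_{r'r}-I_{rr'})^2\big]=O(h^2)$ and the two integrals are orthogonal for $r\neq r'$, the residual $\tfrac12\sum_{r<r'}\big(\frac{\partial g_r}{\partial x}g_{r'}-\frac{\partial g_{r'}}{\partial x}g_r\big)(I_{r'r}-I_{rr'})$ has zero mean but $L^2$-size $O(h)$, which forces $p_2=1$ and hence global order $\tfrac12$ unless the commutativity condition $\frac{\partial g_{r_1}}{\partial x}g_{r_2}=\frac{\partial g_{r_2}}{\partial x}g_{r_1}$ holds. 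This is consistent with the Clark--Cameron barrier: no scheme driven only by Wiener increments can exceed strong order $\tfrac12$ for general non-commutative multi-dimensional noise. The theorem therefore implicitly requires a commutativity hypothesis (vacuous for $m=1$ and satisfied by the structured systems of Section \ref{sec:struct}); your proposal, which isolates and names this obstruction and states the condition under which the single-noise argument carries over verbatim, is in this respect more careful than the paper itself.
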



\section{SEDG methods for L-G SDEs with special structures}
\label{sec:struct}
In this section,  we investigate the accuracy and the structure-preserving properties of the SEDG schemes applied to certain L-G SDEs with special structures.
\subsection{SEDG methods for a class of stochastic highly oscillatory systems}
\label{sec:analy}
Consider the nonlinear stochastic oscillators  with high frequency (\cite{cohen2012numerical}, \cite{vilmart2014weak})
\begin{equation}\label{intro:addosc}
\ddot{X_t} + \omega^2 X_t ={} f(X_t) + g(X_t)\circ \dot{W_t},
\end{equation}
where $\omega$ is a large positive constant, and $f,\,g$ are smooth real functions.

We first present a one-step error analysis for the SEDG method applied to (\ref{intro:addosc}). In order to facilitate the analysis, we rewrite (\ref{intro:addosc}) into
\begin{equation}\label{analy:eq-2}
{\rm d} \begin{pmatrix}
      x^1 \\
      x^2
      \end{pmatrix} ={} \begin{pmatrix}
      0 & -\omega^2 \\
      1 & 0
      \end{pmatrix} \begin{pmatrix}
      x^1 \\
      x^2
      \end{pmatrix} {\rm d}t + \begin{pmatrix}
      f(x^2) \\
      0
      \end{pmatrix} {\rm d}t + \begin{pmatrix}
      g(x^2) \\
      0
      \end{pmatrix} \circ {\rm d}W(t).
\end{equation}
Besides, we assume that $f = \nabla U$, $g = \nabla V$.

Due to the high frequency $\omega$ and the expected usage of reasonable time step sizes, we assume $h \ge O(\frac{1}{\omega}) $.
Our aim is to estimate the root mean-square error. For convenience, we denote $\bar{\nabla} U(x^2_n,x^2_{n+1})$ and $\bar{\nabla} V(x^2_n,x^2_{n+1})$ by $\bar{\nabla} U$ and $\bar{\nabla} V$, respectively.

By applying (\ref{const:eq-4}) to (\ref{analy:eq-2}), we can obtain
\begin{equation}\label{analy:eq-3}
\begin{split}
x^1 (t_{n+1}) - x^1_{n+1} & ={} \int_{t_n}^{t_{n+1}} \cos(\omega (t_{n+1}-s)) [\nabla U(x^2(s)) - \bar{\nabla} U] {\rm d}s \\
				    & + \frac{1}{2} \int_{t_n}^{t_{n+1}} \cos(\omega (t_{n+1}-s)) Hess(V) \nabla V(x^2(s)) {\rm d}s \\
				    & + \int_{t_n}^{t_{n+1}} \cos(\omega (t_{n+1}-s)) [\nabla V(x^2(s)) - \nabla V(x^2_n)] {\rm d}W(s) \\
				    & + \nabla V(x^2_n) \int_{t_n}^{t_{n+1}} \cos(\omega (t_{n+1}-s)) {\rm d}W(s) \\
				    & - \bar{\nabla} V \cos(\frac{\omega h}{2}) \Delta W_n.
\end{split}
\end{equation}

Under the impact of high oscillation, we note that $$\bar{\nabla} D = \nabla D(x^2_n) + \frac{1}{2} Hess(D) (\cos(\omega h) - 1) x^2_n + O({\omega}^{-1}),\quad \mbox{for} \,\,D = U, V.$$
Thus, we are only concerned with the low order term of the right-hand-side of (\ref{analy:eq-3})
\begin{equation*}
P_{x^1, low} :={} \nabla V(x^2_n) \int_{t_n}^{t_{n+1}} \cos(\omega (t_{n+1}-s)) {\rm d}W(s) - \bar{\nabla} V \cos(\frac{\omega h}{2}) \Delta W_n.
\end{equation*}

By the H\"older inequality and the It\^o isometry, we get $E{| P_{x^1, low}|}^2 = O(h)$. Similarly, replacing $\cos(\omega h)$ by ${\omega}^{-1} \sin(\omega h)$ to calculate $x^2 (t_{n+1}) - x^2_{n+1}$, we get $E{| P_{x^2, low}|}^2 = O({\omega}^{-2} h)$. Thus
\begin{equation}\label{analy:eq-5}
\begin{split}
( E{| x^1 (t_{n+1}) - x^1_{n+1} |}^2 )^{\frac{1}{2}} & ={} O( h^{\frac{1}{2}}), \\
( E{| x^2 (t_{n+1}) - x^2_{n+1} |}^2 )^{\frac{1}{2}} & ={} O( \omega^{-1} h^{\frac{1}{2}} ).
\end{split}
\end{equation}

Note that, under our assumptions, the oscillator (\ref{analy:eq-2}) is a stochastic Hamiltonian system. So we compare our SEDG scheme with the symplectic Euler-Maruyama (SEM) scheme, one of the standard methods for solving stochastic Hamiltonian systems. To distinguish the symbols for the SEM scheme from those of the SEDG, we use symbol $y$ instead of $x$ for the SEM method. Analogously, we have
\begin{equation}\label{analy:eq-6}
\begin{split}
y^1 (t_{n+1}) - y^1_{n+1} & ={} - \omega^{2} \int_{t_n}^{t_{n+1}} [y^2(s) - y^2_n] {\rm d}s \\
				    & + \int_{t_n}^{t_{n+1}} ( \nabla U + \frac{1}{2} Hess(V) \nabla V ) {\Big|}^{y^2(s)}_{y^2_n} {\rm d}s \\
				    & + \int_{t_n}^{t_{n+1}} [\nabla V(x^2(s)) - \nabla V(x^2_n)] {\rm d}W(s).
\end{split}
\end{equation}
Again, we consider the low order term of the right-hand-side of (\ref{analy:eq-6})
\begin{equation*}
P_{y^1, low} :={} \omega^{2} \int_{t_n}^{t_{n+1}} [y^2(s) - y^2_n] {\rm d}s.
\end{equation*}
By the Bunyakovsky-Schwarz inequality, we get $E{| P_{y^1, low}|}^2 = O(\omega^4 h^4)$. Therefore,
\begin{equation}\label{analy:eq-7}
( E{| y^1 (t_{n+1}) - y^1_{n+1} |}^2 )^{\frac{1}{2}} ={} O( \omega^2 h^2 ).
\end{equation}
Then we calculate $y^2 (t_{n+1}) - y^2_{n+1} = \int_{t_n}^{t_{n+1}} [y^1(s) - y^1_{n+1}] {\rm d}s$. By (\ref{analy:eq-7}), it is not difficult to obtain
\begin{equation}\label{analy:eq-8}
( E{| y^2 (t_{n+1}) - y^2_{n+1} |}^2 )^{\frac{1}{2}} ={} O( \omega^2 h^3 ).
\end{equation}

We can see from (\ref{analy:eq-5}), (\ref{analy:eq-7}) and (\ref{analy:eq-8}) that, with the increase of the frequency parameter $\omega$, the SEDG method becomes more accurate than the SEM method. The main reason underlying the fact is the exponential structure of the SEDG scheme which cancels the lower order terms in the error with respect to $\omega$, while other non-exponential integrators can not cancel out such terms in the error.

As an illustration we apply the SEDG scheme to a concrete highly oscillatory 2-dimensional stochastic Hamiltonian system
\begin{equation}\label{exam-4:eq-1}
{\rm d} \begin{pmatrix}
      x^1 \\
      x^2
      \end{pmatrix} ={} \begin{pmatrix}
      0 & -\omega^2 \\
      1 & 0
      \end{pmatrix} \begin{pmatrix}
      x^1 \\
      x^2
      \end{pmatrix} {\rm d}t + \begin{pmatrix}
      \sigma \\
      0
      \end{pmatrix} \circ {\rm d}W(t), \quad \begin{pmatrix}
      x^1(0) \\
      x^2(0)
      \end{pmatrix} =\begin{pmatrix}
      x^1_0\\
      x^2_0
      \end{pmatrix} ,
      \end{equation}
with $\omega > 0$ and $\sigma \not = 0$. It has the Hamiltonian functions $H_1=\frac{1}{2}((x^1)^2+\omega^2(x^2)^2)$ and $H_2=-\sigma x^2$. For large $\omega$, the oscillator is a stiff SDE with high frequency. A simple derivation or direct referring to the literature such as \cite{milstein2002symplectic} yields the symplecticity of the system
\begin{equation}\label{addsymp}
dx^1(t)\wedge dx^2(t)=dx^1_0\wedge dx^2_0,\quad \forall t\ge 0.
\end{equation}
Moreover, there is a linear growth property with respect to $t$ for the expectation of the function $H_1$ (see e.g. \cite{cohen2012numerical}), i.e.

\begin{equation}\label{exam-4-thm-1}
E \left[ \frac{1}{2} ( {x^1}(t)^2 + \omega^2 {x^2}(t)^2 ) \right] = \frac{1}{2} ((x^1_0)^2 + \omega^2 (x^2_0)^2 ) + \frac{\sigma^2}{2} t,\quad \forall t\ge0.
\end{equation}

The oscillator (\ref{exam-4:eq-1}) can be written in the form of the general L-G SDE (\ref{intro:eq-6-1}) if we take $A = \begin{pmatrix}
      0 & -\omega^2 \\
      1 & 0
      \end{pmatrix}$, $Q_1 = 0$, $Q_2 = I$ and $V=\sigma x^1$ in (\ref{intro:eq-6-1}). Then the SEDG scheme (\ref{const:eq-4}) applied to the oscillator (\ref{exam-4:eq-1}) reads
\begin{equation}\label{exam-4:eq-2}
\begin{split}
 x^1_{n+1} ={} & \cos(h \omega) x^1_{n} - \omega \sin(h \omega) x^2_{n} + \sigma \cos(\frac{h \omega}{2}) \Delta W_n, \\
 x^2_{n+1} ={} & \omega^{-1} \sin(h \omega) x^1_{n} + \cos(h \omega) x^2_{n} + \sigma \omega^{-1} \sin(\frac{h \omega}{2}) \Delta W_n.
\end{split}
\end{equation}
\begin{theorem}\label{add:thm} The SEDG scheme (\ref{exam-4:eq-2}) for the highly oscillatory SHS (\ref{exam-4:eq-1}) preserves the symplecticity (\ref{addsymp}), as well as the linear growth property (\ref{exam-4-thm-1}) of the system (\ref{exam-4:eq-1}).
\end{theorem}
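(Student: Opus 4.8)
The plan is to treat the two assertions separately; both follow from short direct computations with the explicit scheme (\ref{exam-4:eq-2}), so I would not invoke any general symplecticity or near-conservation machinery.

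For the symplecticity, I would first note that (\ref{exam-4:eq-2}) expresses $(x^1_{n+1},x^2_{n+1})$ as an affine function of $(x^1_n,x^2_n)$ whose linear part is the constant matrix
\[
 M(h\omega)=\begin{pmatrix} \cos(h\omega) & -\omega\sin(h\omega)\\[2pt] \omega^{-1}\sin(h\omega) & \cos(h\omega)\end{pmatrix},
\]
while the inhomogeneous term $\sigma\big(\cos(\tfrac{h\omega}{2}),\ \omega^{-1}\sin(\tfrac{h\omega}{2})\big)^{\rm T}\Delta W_n$ is independent of the phase variables. Taking exterior derivatives with respect to $(x^1_n,x^2_n)$ annihilates the inhomogeneous term, so $dx^1_{n+1}\wedge dx^2_{n+1}=\det M(h\omega)\, dx^1_n\wedge dx^2_n$; since $\det M(h\omega)=\cos^2(h\omega)+\sin^2(h\omega)=1$, one step preserves the two-form, and (\ref{addsymp}) for the discrete flow follows by induction on $n$.

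For the linear growth of $E H_1$, I would set $E_n:=\tfrac12\big((x^1_n)^2+\omega^2(x^2_n)^2\big)$ and expand $(x^1_{n+1})^2+\omega^2(x^2_{n+1})^2$ directly from (\ref{exam-4:eq-2}). After grouping, the coefficients of $(x^1_n)^2$ and $(x^2_n)^2$ collapse to $1$ and $\omega^2$ via $\cos^2+\sin^2=1$; the $x^1_n x^2_n$ cross term cancels; the coefficient of $(\Delta W_n)^2$ equals $\sigma^2\big(\cos^2(\tfrac{h\omega}{2})+\sin^2(\tfrac{h\omega}{2})\big)=\sigma^2$; and the two terms linear in $\Delta W_n$ combine, using product-to-difference identities such as $\cos(h\omega)\cos(\tfrac{h\omega}{2})+\sin(h\omega)\sin(\tfrac{h\omega}{2})=\cos(\tfrac{h\omega}{2})$ and $\cos(h\omega)\sin(\tfrac{h\omega}{2})-\sin(h\omega)\cos(\tfrac{h\omega}{2})=-\sin(\tfrac{h\omega}{2})$, into $2\sigma\cos(\tfrac{h\omega}{2})x^1_n\Delta W_n-2\omega\sigma\sin(\tfrac{h\omega}{2})x^2_n\Delta W_n$. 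This yields the one-step identity
\[
 2E_{n+1}=2E_n+\sigma^2(\Delta W_n)^2+2\sigma\cos(\tfrac{h\omega}{2})x^1_n\Delta W_n-2\omega\sigma\sin(\tfrac{h\omega}{2})x^2_n\Delta W_n .
\]

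Then I would take the conditional expectation given $\mathscr{F}_{t_n}$. Since $\Delta W_n$ is independent of $\mathscr{F}_{t_n}$ (hence of $x^1_n,x^2_n$) with $E[\Delta W_n]=0$ and $E[(\Delta W_n)^2]=h$, the two linear terms drop out and $E[E_{n+1}\mid\mathscr{F}_{t_n}]=E_n+\tfrac{\sigma^2 h}{2}$; taking total expectation and iterating from $n=0$ gives $E[E_n]=\tfrac12\big((x^1_0)^2+\omega^2(x^2_0)^2\big)+\tfrac{\sigma^2}{2}t_n$, which is (\ref{exam-4-thm-1}) on the grid. One point worth a remark is that for (\ref{exam-4:eq-2}) no truncation of the increment is required, the scheme being linear in $\Delta W_n$, so we may use the genuine increments $\Delta W_n\sim N(0,h)$ and the moment identities hold exactly. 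I do not expect a genuine obstacle here: the work is confined to the trigonometric bookkeeping in the energy expansion and to the elementary appeal to independence and the first two moments of the noise increment.
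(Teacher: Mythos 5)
Your proposal is correct and follows essentially the same route as the paper: the symplecticity is obtained from the unit determinant $\cos^2(h\omega)+\sin^2(h\omega)=1$ of the linear part of the affine map (the noise term being independent of the phase variables), and the linear growth follows from directly expanding $(x^1_{n+1})^2+\omega^2(x^2_{n+1})^2$, using the trigonometric cancellations and the moments $E[\Delta W_n]=0$, $E[(\Delta W_n)^2]=h$. Your additional remark that the explicit scheme needs no truncated increments, so the moment identities hold exactly, is a worthwhile clarification that the paper leaves implicit.
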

\begin{proof}
Obviously, according to (\ref{exam-4:eq-2})
\begin{equation*}
dx^1_{n+1}\wedge dx^2_{n+1}=(\cos^2 (h\omega)+\sin^2 (h\omega)) dx^1_n\wedge dx^2_n=dx^1_n\wedge dx^2_n, \quad \forall n\ge 0.
\end{equation*}
Thus \ref{exam-4:eq-2} preserves the symplecticity of the original system. On the other hand, a straightforward calculation gives
\begin{equation*}
\begin{split}
E((x^1_{n+1})^2+\omega^2 (x^2_{n+1})^2)&=E((x^1_n)^2+\omega^2(x^2_n)^2)+\sigma^2 h\\
                                                                    &=(x^1_0)^2+\omega^2 (x^2_0)^2+(n+1)\sigma^2 h \\
                                                                     &=(x^1_0)^2+\omega^2 (x^2_0)^2+\sigma t_{n+1},\quad \forall n\ge 0.
\end{split}
\end{equation*}
\end{proof}
Next we show that the SEM does not preserve the linear growth property.  Note the SEM for (\ref{exam-4:eq-1})  reads
\begin{equation}\label{addsem}
\begin{split}
x^1_{n+1}&=x^1_n-\omega^2 h x^2_n+\sigma \Delta W_n,\\
x^2_{n+1}&=x^2_n+h x^1_{n+1}.
\end{split}
\end{equation}
Then it is easy to see that
\begin{equation*}
E((x^1_{n+1})^2+\omega^2 (x^2_{n+1})^2)=E((x^1_{n})^2+\omega^2 (x^2_{n})^2)+\sigma^2 h+R_n,
\end{equation*}
where
$$R_n=\omega^2 h^2 [E((x^1_n)^2-\omega^2 (x^2_n)^2+\omega^4h^2(x^2_n)^2-2\omega^2hx^1_nx^2_n)+\sigma^2h].$$
Obviously,  the symplectic Euler-Maruyama method can not preserve the linear growth property.

\subsection{A class of energy-preserving stochastic Poisson systems}
Consider the stochastic Poisson system (\cite{cohen1})
\begin{equation}\label{intro:poisson}
 {\rm d}X(t) = Q (M X(t) + \nabla U(X(t))) ( {\rm d}t + \sigma \circ {\rm d}W(t) ), \quad X(t_0) = x_0,
\end{equation}
where $Q$ is a skew-symmetric and nonsingular matrix, $M$ is a symmetric and nonsingular matrix, and $U \in \mathbb{C}^{3}(\mathbb{R}^d)$. It
has the energy $\mathcal{H}(X)=\frac{1}{2} X^{\mathrm{T}} M X + U(X) $ (\cite{cohen1}).
\begin{theorem}\label{struct:thm-1}
 $\mathcal{H}(X) = \frac{1}{2} X^{\mathrm{T}} M X + U(X)$  is a first integral of the stochastic system (\ref{intro:poisson}).
\end{theorem}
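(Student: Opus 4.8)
The plan is to show that $\mathcal{H}(X(t))$ is almost surely constant along solutions of (\ref{intro:poisson}) by computing its Stratonovich differential and observing that it vanishes identically. Working with the Stratonovich form is the natural choice here because the ordinary chain rule applies, exactly as in the deterministic Poisson/Hamiltonian setting, so no It\^o correction terms appear. First I would note that $\nabla \mathcal{H}(X) = M X + \nabla U(X)$, which is precisely the vector appearing (up to the factor $Q$) in both the drift and the diffusion coefficient of (\ref{intro:poisson}).

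Next I would apply the Stratonovich chain rule to $t \mapsto \mathcal{H}(X(t))$, writing
\begin{equation*}
{\rm d}\,\mathcal{H}(X(t)) = \nabla \mathcal{H}(X(t))^{\mathrm T}\, {\rm d}X(t)
= \nabla \mathcal{H}(X(t))^{\mathrm T}\, Q\, \nabla \mathcal{H}(X(t))\,\bigl({\rm d}t + \sigma \circ {\rm d}W(t)\bigr),
\end{equation*}
where I have substituted the right-hand side of (\ref{intro:poisson}) and used $MX+\nabla U(X) = \nabla \mathcal{H}(X)$. The key algebraic step is then that for any skew-symmetric matrix $Q$ and any vector $v \in \mathbb{R}^d$ one has $v^{\mathrm T} Q v = 0$: indeed $v^{\mathrm T} Q v = (v^{\mathrm T} Q v)^{\mathrm T} = v^{\mathrm T} Q^{\mathrm T} v = -v^{\mathrm T} Q v$, forcing the scalar to be zero. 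Applying this with $v = \nabla \mathcal{H}(X(t))$ makes both the ${\rm d}t$ term and the $\circ\,{\rm d}W(t)$ term vanish, so ${\rm d}\,\mathcal{H}(X(t)) = 0$, and hence $\mathcal{H}(X(t)) = \mathcal{H}(x_0)$ for all $t \ge t_0$ almost surely.

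There is no real obstacle in this argument; it is a short computation. The only points that deserve a line of care are: (i) justifying the use of the Stratonovich chain rule, which requires $\mathcal{H} \in \mathbb{C}^{2}$ — guaranteed by the hypothesis $U \in \mathbb{C}^{3}(\mathbb{R}^d)$ together with the quadratic form $\tfrac12 X^{\mathrm T} M X$ being smooth; and (ii) making explicit that the common factor $\nabla \mathcal{H}(X)$ in the drift and diffusion is what allows the single skew-symmetry identity to kill both terms simultaneously. I would also remark in passing that the same computation shows $\mathcal{H}$ is preserved regardless of the form of the noise coefficient $\sigma$, since the vanishing is structural rather than stochastic.
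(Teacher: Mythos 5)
Your proof is correct and follows essentially the same route as the paper's: apply the Stratonovich chain rule to $\mathcal{H}$, identify $\nabla\mathcal{H}(X)=MX+\nabla U(X)$ as the common factor in the drift and diffusion, and use the skew-symmetry of $Q$ to conclude $\nabla\mathcal{H}^{\mathrm T}Q\,\nabla\mathcal{H}=0$. You simply spell out the details that the paper leaves implicit.
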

\begin{proof}
  Since the coefficient matrix $Q$ is skew symmetric, and $M$ is symmetric, then according to (\ref{intro:poisson}) we have
  \begin{equation*}
  {\rm d}\mathcal{H}(X) ={} (M X + \nabla U(X))^{\mathrm{T}} {\rm d} X = 0,
  \end{equation*}
 which means that the quantity $\mathcal{H}(X)$ is invariant.
\end{proof}

 Note that there is a linear part in the diffusion coefficient of the L-G SDE (\ref{intro:poisson}). To make full use of this structural feature, we modify the change of variables for the general L-G SDE (\ref{intro:eq-6-1}) to be $Z(t) = \exp(- A (t+\sigma W(t)) ) X(t)$,
 where $A = Q M$. Then we have
\begin{equation*}
 \begin{split}
  {\rm d}Z(t) ={} & - A \exp(- A (t+\sigma W(t)) ) X(t) ( {\rm d}t + \sigma \circ {\rm d}W(t) ) \\
  	          + & \exp(- A (t+\sigma W(t)) ){\rm d}X(t) \\
                    ={} & \exp(- A (t+\sigma W(t)) ) Q \nabla U(X(t)) ( {\rm d}t + \sigma \circ {\rm d}W(t) )
 \end{split}
\end{equation*}

Defining $E_A (t_0,t) := \exp(A [t - t_0 + \sigma ( W(t) - W(t_0) ) ])$, we reformulate (\ref{intro:poisson}) in integral form on $[t_0, t_0+h]$ for any $h\ge 0$,
\begin{equation}\label{struct:eq-2}
X(t_0+h) ={}  E_A (t_0,t_0+h) x_0+ \int_{t_0}^{t_0+h} E_A (s,t_0+h) Q \nabla U(X(s)) ( {\rm d}s + \sigma \circ {\rm d}W(s) ).
\end{equation}

Using the symmetric discrete gradient (\ref{sdg1}) for $\nabla U$,  and abbreviating $E_A(t_n,t_n+h)$ with $E_A(t_n)$, we construct the scheme
\begin{equation}\label{struct:eq-3}
X_{n+1} ={} E_A (t_n) X_n + (E_A (t_n) - I) M^{-1} \bar{\nabla} U(X_n,X_{n+1}),
\end{equation}
with $X_0=x_0$.
\begin{lemma}\label{struct:lem-1}
 For any symmetric matrix $M$ and scalar $h \geq 0$, $n\ge 0$, let
 \begin{displaymath}
   B_n = E_A (t_n)^{\mathrm{T}} M E_A (t_n) - M,
 \end{displaymath}
where $A=QM$, $ E_A (t_n) = \exp(A (h + \sigma \Delta W_n ))$, and $\sigma \not = 0$.
If $Q$ is a skew symmetric matrix, then $B_n = 0$.
\end{lemma}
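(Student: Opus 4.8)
The plan is to show that $B_n = 0$ by unravelling the definition and using the algebraic relationship between $A = QM$ and the symmetric matrix $M$. First I would observe that the claim $E_A(t_n)^{\mathrm T} M E_A(t_n) = M$ is equivalent to saying that $E_A(t_n)$ is ``$M$-orthogonal'' (i.e. orthogonal with respect to the bilinear form defined by $M$), and that the natural way to establish this is at the level of the matrix exponent rather than the exponential itself: if $A^{\mathrm T} M + M A = 0$, then $A^{\mathrm T}$ is the $M$-adjoint of $-A$, and consequently $\exp(cA)^{\mathrm T} M \exp(cA) = M$ for every scalar $c$.

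The key step is therefore the identity $A^{\mathrm T} M + M A = 0$. Since $A = QM$ with $Q$ skew-symmetric and $M$ symmetric, we have $A^{\mathrm T} = M^{\mathrm T} Q^{\mathrm T} = M(-Q) = -MQ$, so $A^{\mathrm T} M = -MQM = -M(QM) = -MA$, which is exactly the desired relation. With $c = h + \sigma\Delta W_n$ fixed (it is just a real number for each realization and each $n$), I would then verify $\exp(cA)^{\mathrm T} M \exp(cA) = M$ by one of two equivalent routes: either differentiate $\Phi(c) := \exp(cA)^{\mathrm T} M \exp(cA)$ with respect to $c$, obtaining $\Phi'(c) = \exp(cA)^{\mathrm T}(A^{\mathrm T} M + M A)\exp(cA) = 0$, so $\Phi(c) \equiv \Phi(0) = M$; or expand both exponentials as power series and check term by term that $A^{\mathrm T} M + MA = 0$ forces all higher-order contributions to cancel. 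The derivative argument is cleaner and I would use it. Substituting back gives $B_n = E_A(t_n)^{\mathrm T} M E_A(t_n) - M = M - M = 0$.

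I do not anticipate a serious obstacle here; this is essentially the standard fact that the exponential of an element of the Lie algebra $\mathfrak{sp}(M)$ (or more generally the Lie algebra preserving a symmetric form) lies in the corresponding Lie group, specialized to the one-parameter family generated by $A$. The only point requiring a word of care is that $\sigma\Delta W_n$ is random, but since the identity $\exp(cA)^{\mathrm T} M\exp(cA) = M$ holds for \emph{all} real $c$, it holds in particular pathwise for $c = h + \sigma\Delta W_n$, so no stochastic subtlety enters. I would also note in passing that nonsingularity of $M$ and $Q$ is not needed for this particular lemma, only symmetry of $M$ and skew-symmetry of $Q$; those hypotheses are carried along because the lemma is applied within the energy-preservation argument for scheme (\ref{struct:eq-3}), where $M^{-1}$ appears.
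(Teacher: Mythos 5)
Your proof is correct, but it takes a genuinely different route from the paper's. You argue directly at the level of the matrix exponential: the identity $A^{\mathrm T}M+MA=0$ (which follows immediately from $A=QM$ with $Q$ skew and $M$ symmetric) makes $\Phi(c)=\exp(cA)^{\mathrm T}M\exp(cA)$ constant in $c$, so $\Phi(c)\equiv M$ for every real $c$, and in particular pathwise for the random scalar $c=h+\sigma\Delta W_n$. The paper instead observes that $B_n$ is trivially symmetric and reduces the claim to showing $B_n$ is \emph{also} skew-symmetric; it gets the skew-symmetry by invoking its Theorem 3.1 (invariance of $\tfrac12 Y^{\mathrm T}MY$) for the auxiliary linear SDE ${\rm d}Y=AY({\rm d}t+\sigma\circ{\rm d}W)$, whose exact flow map over one step is precisely $E_A(t_n)$, so that $Y^{\mathrm T}B_nY=0$ for all $Y$. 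Your argument is more self-contained and more elementary: it is a purely deterministic Lie-group fact, it makes explicit that no stochasticity and no nonsingularity of $M$ or $Q$ is actually used, and it avoids the paper's implicit reliance on the exact solvability of the linear SDE and on the earlier invariance theorem. The paper's argument, in exchange, stays entirely within the stochastic framework already set up and reuses machinery it has in hand, which is why the hypotheses are stated the way they are. Both proofs are valid; yours would stand on its own without reference to the rest of Section 3.
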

\begin{proof}
Note that $B_n$ is symmetirc. To show $B_n=0$ it suffices to show that $B_n$ is also skew symmetric. Consider the SDE
\begin{equation}\label{sec-2-lem-1:eq-1}
  {\rm d}Y(t) = A Y(t) ( {\rm d}t + \sigma \circ {\rm d}W(t) ), \quad Y(t_0) = y_0.
\end{equation}
Comparing the above with (\ref{intro:poisson}), then it is not difficult to see that $\frac{1}{2} Y^{\mathrm{T}} M Y$ is invariant for (\ref{sec-2-lem-1:eq-1}).
Since $Y(t_n+h) = E_A (t_n)  Y(t_n)$, we have
\begin{equation}\label{sec-2-lem-1:eq-2}
  \frac{1}{2} Y(t_n)^{\mathrm{T}} E_A (t_n)^{\mathrm{T}} M E_A (t_n) Y(t_n)= \frac{1}{2} Y(t_n)^{\mathrm{T}} M Y(t_n),\quad \forall \, n\ge 0.
\end{equation}
Then we know that  $B_n$ is skew symmetric.
\end{proof}
If $\sigma = 0$, we get the deterministic version of Lemma \ref{struct:lem-1} (also see \cite{li2016exponential}).

\begin{theorem}\label{struct:thm-2}
The SEDG scheme (\ref{struct:eq-3}) for the stochastic system (\ref{intro:poisson}) preserves the energy $\mathcal{H}(X) = \frac{1}{2} X^{\mathrm{T}} M X + U(X)$ exactly, i.e.
\begin{equation*}
\mathcal{H}(X_{n+1}) = \mathcal{H}(X_{n}), \quad n=0,1,\dots,N-1.
\end{equation*}
\end{theorem}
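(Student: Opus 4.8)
The plan is to verify the defining identity $\mathcal{H}(X_{n+1}) = \mathcal{H}(X_n)$ directly by substituting the scheme (\ref{struct:eq-3}) into the energy and using two ingredients: the algebraic identity of Lemma \ref{struct:lem-1}, namely $E_A(t_n)^{\mathrm{T}} M E_A(t_n) = M$, and the discrete-gradient property from Definition \ref{const:de-2}, namely $\bar{\nabla} U(X_n,X_{n+1})^{\mathrm{T}}(X_{n+1}-X_n) = U(X_{n+1}) - U(X_n)$. First I would write $\mathcal{H}(X_{n+1}) - \mathcal{H}(X_n) = \frac{1}{2}(X_{n+1}^{\mathrm{T}} M X_{n+1} - X_n^{\mathrm{T}} M X_n) + (U(X_{n+1}) - U(X_n))$ and handle the quadratic piece and the potential piece separately.

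For the quadratic piece, I substitute $X_{n+1} = E_A(t_n) X_n + (E_A(t_n) - I) M^{-1} \bar{\nabla} U$ (abbreviating $\bar{\nabla} U := \bar{\nabla} U(X_n, X_{n+1})$ and $E := E_A(t_n)$) and expand $X_{n+1}^{\mathrm{T}} M X_{n+1}$. This produces three types of terms: a pure $X_n^{\mathrm{T}} E^{\mathrm{T}} M E X_n$ term, which by Lemma \ref{struct:lem-1} equals $X_n^{\mathrm{T}} M X_n$ and cancels against $-X_n^{\mathrm{T}} M X_n$; cross terms of the form $X_n^{\mathrm{T}} E^{\mathrm{T}} M (E - I) M^{-1} \bar{\nabla} U$; and a quadratic term $\bar{\nabla} U^{\mathrm{T}} M^{-1} (E-I)^{\mathrm{T}} M (E-I) M^{-1} \bar{\nabla} U$. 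The key computation is to show that after collecting everything, the quadratic piece equals exactly $\bar{\nabla} U^{\mathrm{T}}(X_{n+1} - X_n)$, which then cancels the potential piece $U(X_{n+1}) - U(X_n)$ by the discrete-gradient identity. Using $X_{n+1} - X_n = (E - I)X_n + (E-I)M^{-1}\bar{\nabla} U$, so that $M(X_{n+1}-X_n) = M(E-I)X_n + M(E-I)M^{-1}\bar{\nabla} U$, one checks that $\bar{\nabla} U^{\mathrm{T}}(X_{n+1}-X_n)$ and the collected quadratic-piece terms agree; the matching of the cross terms is where Lemma \ref{struct:lem-1} is used a second time, in the equivalent form $E^{\mathrm{T}} M E - M = 0$ rearranged as $E^{\mathrm{T}} M = M E^{-1}$, to symmetrize the two cross contributions.

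I expect the main obstacle to be purely bookkeeping: carefully tracking which $E$'s are transposed and repeatedly invoking $E^{\mathrm{T}} M E = M$ (equivalently $M E = E^{-\mathrm{T}} M$, i.e.\ $E$ is an $M$-orthogonal map) to collapse the cross and quadratic terms into the single expression $\bar{\nabla} U^{\mathrm{T}}(X_{n+1} - X_n)$. There is no analytic difficulty — no estimates, no stochastic calculus — since $\Delta W_n$ enters only through $E_A(t_n)$, and Lemma \ref{struct:lem-1} already absorbed the randomness; the statement is a pathwise algebraic identity holding for every realization. Once the quadratic piece is shown to equal $\bar{\nabla} U^{\mathrm{T}}(X_{n+1}-X_n)$, the discrete-gradient property of Definition \ref{const:de-2} finishes the proof immediately, giving $\mathcal{H}(X_{n+1}) - \mathcal{H}(X_n) = \bar{\nabla} U^{\mathrm{T}}(X_{n+1}-X_n) - (U(X_{n+1}) - U(X_n)) = 0$.
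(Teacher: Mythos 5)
Your proposal is correct and follows essentially the same route as the paper's proof: expand $\tfrac12 X_{n+1}^{\mathrm{T}} M X_{n+1}$ using the scheme, kill the pure term with Lemma \ref{struct:lem-1}, collapse the cross and quadratic terms via $E_A^{\mathrm{T}} M E_A = M$, and cancel against $U(X_{n+1})-U(X_n)$ by the discrete-gradient identity. The only blemish is a sign slip in the narration — the quadratic piece collapses to $-\bar{\nabla} U^{\mathrm{T}}(X_{n+1}-X_n)$ rather than $+\bar{\nabla} U^{\mathrm{T}}(X_{n+1}-X_n)$, which is precisely why it cancels the potential piece — but this does not affect the validity of the argument.
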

\begin{proof}
  For convenience, we denote $\bar{\nabla} U(X_n,X_{n+1})$ and $E_A (t_n) ={} \exp(A (h + \sigma \Delta W_n ) $ by $\bar{\nabla} U$ and $E_A $, respectively.
  Firstly, we calculate $\frac{1}{2} X_{n+1}^{\mathrm{T}} M X_{n+1}$ according to (\ref{struct:eq-3}).
 \begin{equation}\label{struct-thm-2:eq-1}
  \begin{split}
   \frac{1}{2} X_{n+1}^{\mathrm{T}} M X_{n+1} ={} & \frac{1}{2} \left( E_A X_n + (E_A - I) M^{-1} \bar{\nabla} U \right)^{\mathrm{T}} \\
   	& M \left( E_A X_n + (E_A - I) M^{-1} \bar{\nabla} U \right) \\
	={} & \frac{1}{2} X_{n}^{\mathrm{T}} E_A^{\mathrm{T}} M E_A X_{n} + X_{n}^{\mathrm{T}} E_A^{\mathrm{T}} M (E_A - I) (M^{-1} \bar{\nabla} U) \\
	+ & \frac{1}{2} (M^{-1} \bar{\nabla} U)^{\mathrm{T}} (E_A - I)^{\mathrm{T}} M (E_A - I) (M^{-1} \bar{\nabla} U).
  \end{split}
 \end{equation}

  Secondly, we calculate $U(X_{n+1}) - U(X_n)$ basing on Definition \ref{const:de-2} and (\ref{struct:eq-3}).
  \begin{equation}\label{struct-thm-2:eq-2}
   \begin{split}
    U(X_{n+1}) - U(X_n) ={} & \left( X_{n+1} - X_{n} \right)^{\mathrm{T}} \bar{\nabla} U \\
	={} &  X_{n}^{\mathrm{T}} ( E_A - I )^{\mathrm{T}} \bar{\nabla} U + (M^{-1} \bar{\nabla} U)^{\mathrm{T}} (E_A - I)^{\mathrm{T}} \bar{\nabla} U.
   \end{split}
  \end{equation}

  Then, combining (\ref{struct-thm-2:eq-1}) and (\ref{struct-thm-2:eq-2}), we get
  \begin{equation}\label{struct-thm-2:eq-3}
   \begin{split}
    \mathcal{H}(X_{n+1}) - \mathcal{H}(X_n) ={} & \frac{1}{2} X_{n}^{\mathrm{T}} B_n X_{n} + X_{n}^{\mathrm{T}} B_n (M^{-1} \bar{\nabla} U) \\
	+ & \frac{1}{2} (M^{-1} \bar{\nabla} U)^{\mathrm{T}} (B_n + E_A^{\mathrm{T}} M - M E_A) (M^{-1} \bar{\nabla} U) \\
	={} &0,
   \end{split}
  \end{equation}
  where $B_n = E_A^{\mathrm{T}} M E_A - M$.
  The last step is according to Lemma \ref{struct:lem-1} and this completes the proof.
\end{proof}

Note that the scheme (\ref{struct:eq-3}) for the system (\ref{intro:poisson}) is constructed in a slightly different way from that of the general SEDG scheme (\ref{const:eq-4}) for the general L-G SDE (\ref{intro:eq-6-1}). As a result, a slight modification of the proof of Theorem \ref{const:thm-1} is needed to prove the error estiamte of (\ref{struct:eq-3}).  We state the result in the following theorem, and put its proof in the Appendix.

\begin{theorem}\label{struct:thm-3}
 Denote $f=g_1=\nabla U$, suppose the $d$-dimensional stochastic system (\ref{intro:poisson}) (let $A = Q M$)
 satisfies the assumptions for the existence and uniqueness of the solution (\ref{addeu}) (for $m=1$) and (\ref{addeu2}).
 In addition, assume that $U \in \mathbb{C}^{3}(\mathbb{R}^d)$ with uniformly bounded derivatives
 and $\nabla U$, as well as $Q Hess(U) (A X + Q \nabla U)$ have bounded second moments along the solution of (\ref{intro:poisson}).
 Then the numerical scheme (\ref{struct:eq-3}) is of root mean-square convergence order 1, i.e.,
 \begin{displaymath}
  (E{| X(t_n) - X_n |}^2)^{\frac{1}{2}} = O(h^1), \quad n=1,\dots,N.
 \end{displaymath}
\end{theorem}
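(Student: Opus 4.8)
The plan is to apply the fundamental mean-square convergence theorem of Milstein (Theorem~1.1 in \cite{milstein1994numerical}), exactly as in the proof of Theorem~\ref{const:thm-1}. Let $X(t_n+h)$ be the exact solution of (\ref{intro:poisson}) started from $X(t_n)=X_n$, given on $[t_n,t_{n+1}]$ by (\ref{struct:eq-2}), and let $X_{n+1}$ be the value produced by (\ref{struct:eq-3}); it then suffices to find exponents $p_1\ge p_2+\tfrac12$, $p_2\ge\tfrac12$ with
\[
 |E(X(t_n+h)-X_{n+1})|=O(h^{p_1}),\qquad \bigl(E|X(t_n+h)-X_{n+1}|^2\bigr)^{1/2}=O(h^{p_2}),
\]
and we aim at $p_1=2$, $p_2=\tfrac32$, which yields global root-mean-square order $p_2-\tfrac12=1$. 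The first step is to put (\ref{struct:eq-3}) in a form directly comparable with (\ref{struct:eq-2}): since $A=QM$ and $\exp(z)-I=z\phi(z)$, one has $(E_A(t_n)-I)M^{-1}=(h+\sigma\Delta W_n)\,\phi\bigl(A(h+\sigma\Delta W_n)\bigr)Q$, so the scheme reads
\[
 X_{n+1}=E_A(t_n)X_n+(h+\sigma\Delta W_n)\,\phi\bigl(A(h+\sigma\Delta W_n)\bigr)\,Q\,\bar{\nabla}U(X_n,X_{n+1}),
\]
which is the analogue of the quadrature in (\ref{const:eq-4}) with $h$ replaced by $h+\sigma\Delta W_n$. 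Since the homogeneous term of (\ref{struct:eq-2}) is $E_A(t_n,t_n+h)X_n=E_A(t_n)X_n$, the homogeneous parts cancel --- up to the bounded-increment truncation $\zeta_h$, whose effect is absorbed into the scheme error by taking $k\ge 2$ as in \cite{milstein2002numerical} --- leaving
\[
 X(t_n+h)-X_{n+1}=\int_{t_n}^{t_{n+1}}\!E_A(s,t_{n+1})Q\nabla U(X(s))({\rm d}s+\sigma\circ{\rm d}W(s))-(h+\sigma\Delta W_n)\,\phi\bigl(A(h+\sigma\Delta W_n)\bigr)Q\,\bar{\nabla}U(X_n,X_{n+1}).
\]

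The second step is to Taylor-expand both sides in $h$ and $\Delta W_n$ (with $\Delta W_n\sim\sqrt h$) and to match leading orders, in the manner of (\ref{sec-2-thm-1:eq-6}) and (\ref{sec-2-thm-1:eq-11}). On the exact side I expand $E_A(s,t_{n+1})=I+A[(t_{n+1}-s)+\sigma(W(t_{n+1})-W(s))]+O(h)$ and expand $\nabla U(X(s))$ about $X_n$ using $X(s)-X_n=(AX_n+Q\nabla U(X_n))[(s-t_n)+\sigma(W(s)-W(t_n))]+O(h)$ (note $AX_n+Q\nabla U(X_n)=Q(MX_n+\nabla U(X_n))$ is the one-step drift/diffusion direction of (\ref{intro:poisson})), and I evaluate the resulting iterated integrals; in particular $\int_{t_n}^{t_{n+1}}(W(t_{n+1})-W(s))\circ{\rm d}W(s)=\int_{t_n}^{t_{n+1}}(W(s)-W(t_n))\circ{\rm d}W(s)=\tfrac12(\Delta W_n)^2$. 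On the scheme side I expand $\phi\bigl(A(h+\sigma\Delta W_n)\bigr)=I+\tfrac12 A(h+\sigma\Delta W_n)+O(h)$, the discrete gradient as in (\ref{sec-2-thm-1:eq-6}), namely $\bar{\nabla}U=\nabla U(X_n)+\tfrac12 Hess(U)(X_n)(X_{n+1}-X_n)+\tfrac12 R_U$, and $X_{n+1}-X_n=(E_A(t_n)-I)(X_n+M^{-1}\bar{\nabla}U)=(h+\sigma\Delta W_n)(AX_n+Q\nabla U(X_n))+O(h)$ (using $AM^{-1}=Q$). The zeroth-order term $(h+\sigma\Delta W_n)Q\nabla U(X_n)$ then cancels, and --- this is the essential point --- the two $O(h)$-in-$L^2$ contributions on the scheme side which are not mean-zero, namely $\tfrac12\sigma^2(\Delta W_n)^2\,AQ\nabla U(X_n)$ (from $\phi-I$) and $\tfrac12\sigma^2(\Delta W_n)^2\,QHess(U)(X_n)(AX_n+Q\nabla U(X_n))$ (from $\bar{\nabla}U-\nabla U(X_n)$), are reproduced exactly on the exact side by the $\tfrac12(\Delta W_n)^2$ iterated Stratonovich integrals above. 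It is precisely here that the hypotheses that $\nabla U$ and $QHess(U)(AX+Q\nabla U)$ have bounded second moments along the solution are used: to control, in $L^2$, the Taylor remainders and the one-step increment differences multiplying these factors.

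Once the $(\Delta W_n)^2$-terms have cancelled, every surviving term in $X(t_n+h)-X_{n+1}$ is of one of three kinds: (i) a higher-order remainder of the It\^o--Taylor expansion of the one-step increment or of the Taylor expansion of $\nabla U$, controlled by the Lipschitz assumption (\ref{addeu}), the $\mathbb{C}^{3}$-boundedness of $U$, and the standard a priori moment estimates for the solution (see \cite{milstein1994numerical}, p.~14), hence $O(h^{3/2})$ in $L^2$; (ii) a cross term $\propto h^2$, $\propto h\,\Delta W_n$, or a deterministic-integrand integral such as $\int_{t_n}^{t_{n+1}}(t_{n+1}-s)\,{\rm d}W(s)$ or $\int_{t_n}^{t_{n+1}}(W(t_{n+1})-W(s))\,{\rm d}s$, each of mean $O(h^2)$ and $O(h^{3/2})$ in $L^2$; or (iii) the discrete-gradient remainder $R_U$ of (\ref{sec-2-thm-1:eq-4}), which, being quadratic in $X_{n+1}-X_n$ and carrying the extra factor $h+\sigma\Delta W_n$, is $O(h^{3/2})$ in $L^2$ and of mean $O(h^2)$. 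Taking expectations and second moments as in (\ref{sec-2-thm-1:eq-11}) and using $E\Delta W_n=E(\Delta W_n)^3=0$, $E(\Delta W_n)^2=h$, $E(\Delta W_n)^4=3h^2$, we obtain $|E(X(t_n+h)-X_{n+1})|=O(h^2)$ and $\bigl(E|X(t_n+h)-X_{n+1}|^2\bigr)^{1/2}=O(h^{3/2})$, i.e.\ $p_1=2$ and $p_2=\tfrac32$, and Milstein's theorem then gives global root-mean-square order $1$.

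The main obstacle --- and the only genuine departure from the proof of Theorem~\ref{const:thm-1} --- is the bookkeeping in the second step: because the noise here enters multiplicatively, through the matrix exponential $\exp(A\sigma\Delta W_n)$, rather than additively through a separate diffusion term, one must expand this Stratonovich exponential carefully and verify that the $O(h^{1/2})$ part, and in particular the non-mean-zero $O(h)$ part, of the exact one-step increment are reproduced term by term by the expansion of $(h+\sigma\Delta W_n)\,\phi\bigl(A(h+\sigma\Delta W_n)\bigr)Q\,\bar{\nabla}U(X_n,X_{n+1})$, so that what remains is mean-zero and $O(h^{3/2})$ in $L^2$. Tracking the cross terms $\propto h\,\Delta W_n$ and $\propto(\Delta W_n)^2$ with the iterated-integral identities above is the delicate part; once it is settled, the remaining estimates are a verbatim repetition of those in the proof of Theorem~\ref{const:thm-1}.
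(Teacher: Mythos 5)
Your proposal is correct and follows essentially the same route as the paper's appendix proof: both apply Milstein's fundamental theorem with one-step bias $O(h^{2})$ and one-step mean-square deviation $O(h^{3/2})$, obtained by expanding the stochastic exponential $E_A$ and the discrete gradient about $X_n$ and observing that the non-mean-zero $(\Delta W_n)^2$ contributions (carrying $AQ\nabla U(X_n)$ and $Q\,Hess(U)(AX_n+Q\nabla U(X_n))$) cancel against the iterated Stratonovich integrals via $\int_{t_n}^{t_{n+1}}\bigl(\int_{t_n}^{s}{\rm d}W(t)\bigr){\rm d}W(s)=\tfrac12((\Delta W_n)^2-h)$, which is exactly the paper's statement $\bar{P}_{12}=0$. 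Your rewriting of the scheme as $(h+\sigma\Delta W_n)\phi\bigl(A(h+\sigma\Delta W_n)\bigr)Q\bar{\nabla}U$ is only a cosmetic repackaging of the paper's $\bar{P}_1+\bar{P}_2$ decomposition.
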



\subsection{The stochastic Langevin-type equations}
Consider the $2\bar{d}$-dimensional stochastic Langevin-type equation
\begin{equation}\label{lang:eq-1}
\begin{split}
 {\rm d}P ={} & f_1 (Q) {\rm d}t - \nu P {\rm d}t + \sigma \circ {\rm d}W(t), \quad P(t_0) = p_0, \\
 {\rm d}Q ={} & M^{-1} P {\rm d}t, \quad Q(t_0) = q_0,
\end{split}
\end{equation}
where $\nu \ge 0$ is a parameter, $\sigma\in\mathbb{R}^{\bar{d}}$ is a constant vector, and $M$ is a $\mathbb{R}^{\bar{d}\times \bar{d}}$ positive definite matrix.
Assume that there exists a scalar function $U_0 (Q)$ such that
\begin{equation}\label{lang:eq-2}
  f_1^i (Q) ={} - \frac{\partial U_0}{\partial Q^i}, \quad i = 1, \dots, \bar{d}.
\end{equation}

\begin{lemma}[see \cite{hong2017high}]\label{lang:de-1}
The symplectic 2-form dissipates exponentially along the phase flow of the system (\ref{lang:eq-1}), i.e.,
\begin{equation}\label{confsym1}
 {\rm d}P(t) \wedge {\rm d}Q(t) ={} \exp(- \nu t) {\rm d}p_0 \wedge {\rm d}q_0, \quad \forall t \ge 0.
\end{equation}
\end{lemma}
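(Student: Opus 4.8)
The plan is to mimic the classical argument that a Hamiltonian flow is symplectic, carrying the damping factor $e^{-\nu t}$ through the computation. Throughout, let ${\rm d}$ acting on $P(t),Q(t)$ denote exterior differentiation with respect to the initial data $(p_0,q_0)$, so that $\big({\rm d}P^i(t),{\rm d}Q^i(t)\big)$ is the first-variation process of (\ref{lang:eq-1}) and
\[
  \bar\omega(t):=\sum_{i=1}^{\bar{d}}{\rm d}P^i(t)\wedge{\rm d}Q^i(t)
\]
is the pull-back of the canonical two-form by its stochastic flow. First I would differentiate (\ref{lang:eq-1}) in $(p_0,q_0)$. Because $\sigma\in\mathbb{R}^{\bar{d}}$ is a \emph{constant} vector, its variation is zero, so the diffusion term drops out and the first variation solves the pathwise linear system
\[
  {\rm d}({\rm d}P^i)=-\sum_{j}\partial_{ij}U_0\,{\rm d}Q^j\,{\rm d}t-\nu\,{\rm d}P^i\,{\rm d}t,\qquad
  {\rm d}({\rm d}Q^i)=\sum_{j}(M^{-1})_{ij}\,{\rm d}P^j\,{\rm d}t,\quad i=1,\dots,\bar{d},
\]
where $\partial_{ij}U_0$ is evaluated at $Q(t)$ and I have used $f_1=-\nabla U_0$ from (\ref{lang:eq-2}). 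The absence of any $\circ\,{\rm d}W$ here is the structural point that makes the rest ordinary (pathwise) calculus in $t$.

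Next I would apply the Leibniz rule to $\bar\omega(t)$; since for each sample path $({\rm d}P(t),{\rm d}Q(t))$ is a $C^1$ function of $t$ (it solves a linear system with continuous coefficients), one gets ${\rm d}\bar\omega=\sum_i\big({\rm d}({\rm d}P^i)\wedge{\rm d}Q^i+{\rm d}P^i\wedge{\rm d}({\rm d}Q^i)\big)$, and inserting the variational equations turns this into
\[
  {\rm d}\bar\omega=\Big(-\sum_{i,j}\partial_{ij}U_0\,{\rm d}Q^j\wedge{\rm d}Q^i-\nu\,\bar\omega+\sum_{i,j}(M^{-1})_{ij}\,{\rm d}P^i\wedge{\rm d}P^j\Big)\,{\rm d}t.
\]
The first and third sums vanish identically: $\partial_{ij}U_0=\partial_{ji}U_0$ and $(M^{-1})_{ij}=(M^{-1})_{ji}$ (the latter since $M$ is positive definite, hence symmetric), while $\wedge$ is antisymmetric, so each such contraction equals its own negative. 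Thus $\bar\omega$ satisfies the scalar linear ODE ${\rm d}\bar\omega=-\nu\,\bar\omega\,{\rm d}t$, and integrating with $\bar\omega(t_0)={\rm d}p_0\wedge{\rm d}q_0$ gives $\bar\omega(t)=e^{-\nu(t-t_0)}\,{\rm d}p_0\wedge{\rm d}q_0$, which is (\ref{confsym1}) (for $t_0=0$).

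The algebra above is a single line; the real content is in justifying it, and I expect that to be the main obstacle. One must know that the stochastic flow of (\ref{lang:eq-1}) is differentiable in the initial data (which follows from smoothness of $f_1$, e.g.\ $U_0\in\mathbb{C}^{2}$ with bounded derivatives), that exterior differentiation in $(p_0,q_0)$ commutes with the Stratonovich integral, and that the time-Leibniz rule is legitimate for the wedge of the two variational processes --- the last point being automatic here exactly because the variational system has no diffusion term and is therefore pathwise smooth in $t$. A clean way to bypass the commutation issues is the change of variables $\tilde P(t):=e^{\nu t}P(t)$: a short computation shows $(\tilde P,Q)$ solves a time-dependent stochastic Hamiltonian system, with $\tilde H_0(t,\tilde P,Q)=\tfrac12 e^{-\nu t}\tilde P^{\mathrm{T}}M^{-1}\tilde P+e^{\nu t}U_0(Q)$ and noise Hamiltonian $\tilde H_1(t,Q)=-e^{\nu t}\sigma^{\mathrm{T}}Q$, whose flow preserves ${\rm d}\tilde P\wedge{\rm d}Q$ by the standard stochastic symplecticity result (cf.\ \cite{milstein2002symplectic}); since $e^{-\nu t}$ is deterministic it factors out of the exterior derivative, so ${\rm d}P\wedge{\rm d}Q=e^{-\nu t}\,{\rm d}\tilde P\wedge{\rm d}Q=e^{-\nu t}\,{\rm d}p_0\wedge{\rm d}q_0$. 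I would present the direct computation and record this alternative as a remark.
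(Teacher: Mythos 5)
Your argument is correct. Note, however, that the paper offers no proof of this lemma at all: it is stated with the attribution ``see \cite{hong2017high}'' and used as an imported fact, so there is nothing internal to compare against. Your direct computation is the standard one and is sound: since $\sigma$ is a constant vector, the first-variation system has no diffusion term, so the wedge $\sum_i {\rm d}P^i\wedge{\rm d}Q^i$ evolves by pathwise ordinary calculus; the contractions of the symmetric matrices $\mathrm{Hess}\,U_0$ and $M^{-1}$ against the antisymmetric wedge vanish, leaving the scalar equation ${\rm d}\bar\omega=-\nu\,\bar\omega\,{\rm d}t$ and hence the factor $e^{-\nu t}$ (the damping here is the scalar $\nu I_{\bar d}$, not the matrix $\nu\Gamma$ of (\ref{intro:eq-2}), so it does factor out). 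Your alternative via $\tilde P=e^{\nu t}P$, which exhibits $(\tilde P,Q)$ as a time-dependent stochastic Hamiltonian system with $\tilde H_0=\tfrac12 e^{-\nu t}\tilde P^{\mathrm T}M^{-1}\tilde P+e^{\nu t}U_0(Q)$ and $\tilde H_1=-e^{\nu t}\sigma^{\mathrm T}Q$, is also correct and has the advantage of delegating all measurability and differentiability-of-the-flow issues to the standard symplecticity theorem of \cite{milstein2002symplectic}; this conformal change of variables is essentially the mechanism behind the cited result in \cite{hong2017high}. Either version would serve as a self-contained proof of the lemma.
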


Because of the above property, the system (\ref{lang:eq-1})  is said to preserve conformal symplectic structure. In fact, when $\nu = 0$,  (\ref{lang:eq-1}) becomes a stochastic Hamiltonian system which preserves symplectic structure, i.e., $dP(t)\wedge dQ(t)=dp_0\wedge dq_0,\,\,\forall t\ge 0$. In the following we assume $\nu>0$.

Let $X=\begin{pmatrix}
      P \\
      Q
      \end{pmatrix}$, $A = \begin{pmatrix}
      - \nu I_{\bar{d}} & 0 \\
      M^{-1} & 0
      \end{pmatrix}$, $Q_1=\begin{pmatrix}0 & -I_{\bar{d}} \\I _{\bar{d}}& 0\end{pmatrix}$, $U=U_0$, $Q_2=I_{2\bar{d}}$, $V=\sigma P$, the equation (\ref{lang:eq-1}) can be written in the form of the general L-G SDE (\ref{intro:eq-6-1}). Applying the SEDG method (\ref{const:eq-4})  to this system, we obtain the following scheme
\begin{equation}\label{lang:eq-5}
\begin{split}
 P_{n+1} ={} & \exp(- \nu h) P_{n} - \frac{1-\exp(- \nu h)}{\nu} \bar{\nabla} U_0 (Q_n,Q_{n+1}) \\
 	     + & \exp(- \frac{\nu h}{2}) \sigma \Delta W_n, \\
 Q_{n+1} ={} & Q_{n} + \frac{1-\exp(- \nu h)}{\nu} M^{-1} P_{n} \\
 	      + & \frac{1}{\nu} \left( \frac{1-\exp(- \nu h)}{\nu} - h \right) M^{-1} \bar{\nabla} U_0 (Q_n,Q_{n+1}) \\
 	      + & \frac{1-\exp(- \frac{\nu h}{2})}{\nu} M^{-1} \sigma \Delta W_n.
\end{split}
\end{equation}

Next we study to what degree of accuracy can the scheme (\ref{lang:eq-5}) preserve the conformal symplecticity of the system (\ref{lang:eq-1}) characterized by Lemma \ref{lang:de-1}.
For convenience and referring to \cite{Hairer2006Geometric}, we check the conformal symplecticity  using its equivalent description.
\begin{theorem}\label{lang:thm-1}
Suppose for any $Q$, $\hat{Q}$, $\partial_1\bar{\nabla} U_0 (Q,\hat{Q})$ and $\partial_2\bar{\nabla} U_0 (Q,\hat{Q})$ are symmetric and bounded. The numerical scheme (\ref{lang:eq-5}) for the stochastic Langevin-type equation (\ref{lang:eq-1})
nearly preserves the conformal symplectic structure within error of root mean-square order 2, i.e.,
\begin{equation}\label{lang:eq-6}
\left( \frac{\partial{(P_{n+1},Q_{n+1})}}{\partial{(P_n,Q_n)}} \right)^{\mathrm{T}} J \left( \frac{\partial{(P_{n+1},Q_{n+1})}}{\partial{(P_n,Q_n)}} \right) ={} \exp(- \nu h) J + R^{CS},
\end{equation}
where for the remainder matrix $R^{CS}$, $(E|R^{CS}|^2)^{\frac{1}{2}}\le O(h^2)$ .
\end{theorem}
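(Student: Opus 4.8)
The plan is to compute the one-step Jacobian $\Psi_{n}:=\partial(P_{n+1},Q_{n+1})/\partial(P_{n},Q_{n})$ of the map determined by (\ref{lang:eq-5}), substitute it into the left-hand side of (\ref{lang:eq-6}), and split $\Psi_{n}^{\mathrm T}J\Psi_{n}$ into the deterministic leading part $\exp(-\nu h)J$ plus a remainder bounded \emph{pathwise} by a constant multiple of $h^{2}$; the root-mean-square estimate for $R^{CS}$ then follows at once by taking second moments.

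First I would fix shorthand for the coefficients appearing in (\ref{lang:eq-5}), $\alpha:=\tfrac{1-\exp(-\nu h)}{\nu}$, $\beta:=\exp(-\nu h)$, $\gamma:=\tfrac1\nu(\alpha-h)$, and record $\alpha=h+O(h^{2})$, $\beta=1-\nu h+O(h^{2})$, and the key identity $\gamma=-\tfrac12 h^{2}+O(h^{3})=O(h^{2})$. Two observations make the Jacobian manageable: (a) the two terms of (\ref{lang:eq-5}) carrying $\Delta W_{n}$ are a deterministic scalar times the fixed vector $\sigma$, hence independent of $(P_{n},Q_{n})$ and killed by differentiation, so $\Psi_{n}$ depends on randomness only through $G_{1}:=\partial_{1}\bar{\nabla}U_{0}(Q_{n},Q_{n+1})$ and $G_{2}:=\partial_{2}\bar{\nabla}U_{0}(Q_{n},Q_{n+1})$, which are symmetric and uniformly bounded by assumption; (b) for $h$ small the scheme is uniquely solvable for $Q_{n+1}$ by the implicit function theorem, using the bound on the derivatives of $\bar{\nabla}U_{0}$, so $\Psi_{n}$ is well defined. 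Implicit differentiation of the $Q$-line of (\ref{lang:eq-5}) gives $(I-\gamma M^{-1}G_{2})\,\partial_{Q_{n}}Q_{n+1}=I+\gamma M^{-1}G_{1}$ and $(I-\gamma M^{-1}G_{2})\,\partial_{P_{n}}Q_{n+1}=\alpha M^{-1}$; since $\gamma=O(h^{2})$ and $G_{2},M^{-1}$ are bounded, $I-\gamma M^{-1}G_{2}$ is invertible and $\partial_{Q_{n}}Q_{n+1}=I+O(h^{2})$, $\partial_{P_{n}}Q_{n+1}=\alpha M^{-1}+O(h^{3})$. Inserting these into the $P$-line yields the other two blocks, $\partial_{P_{n}}P_{n+1}=\beta I-\alpha G_{2}\,\partial_{P_{n}}Q_{n+1}=\beta I+O(h^{2})$ and $\partial_{Q_{n}}P_{n+1}=-\alpha(G_{1}+G_{2}\,\partial_{Q_{n}}Q_{n+1})=-\alpha(G_{1}+G_{2})+O(h^{2})$, with every $O(\cdot)$ a bounded matrix because $G_{1},G_{2}$ are.

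I would then assemble $\Psi_{n}^{\mathrm T}J\Psi_{n}$ block by block. A convenient skeleton is the linear case $U_{0}\equiv 0$: there $G_{1}=G_{2}=0$, the scheme (\ref{lang:eq-5}) is linear, $\Psi_{n}$ is block lower triangular with diagonal blocks $\beta I,\,I$ and lower-left block $\alpha M^{-1}$, and a direct multiplication using $M^{\mathrm T}=M$ gives $\Psi_{n}^{\mathrm T}J\Psi_{n}=\beta J$ exactly, i.e. (\ref{lang:eq-6}) holds with $R^{CS}=0$. In the general case the same bookkeeping, keeping the $G_{i}$-corrections and using repeatedly that $M^{-1}$, $G_{1}$, $G_{2}$, and hence $G_{1}+G_{2}$ and $(G_{1}+G_{2})M^{-1}(G_{1}+G_{2})$, are symmetric, shows that the order-$h^{0}$ and order-$h$ contributions recombine \emph{exactly} into $\beta J$, while every surviving correction carries an extra factor $\alpha^{2}$, $\alpha\gamma$ or $\gamma$ and is therefore $O(h^{2})$ (for instance the $(1,2)$ block equals $\beta I+\alpha^{2}M^{-1}G_{1}+O(h^{3})$, and the diagonal blocks are $O(h^{3})$). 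Collecting these corrections into $R^{CS}$ yields (\ref{lang:eq-6}) with $\|R^{CS}\|\le C h^{2}$ pathwise, $C$ depending only on $\nu$, $\|M\|$, $\|M^{-1}\|$ and the bounds for $\partial_{1}\bar{\nabla}U_{0},\partial_{2}\bar{\nabla}U_{0}$; taking second moments gives $(E\|R^{CS}\|^{2})^{1/2}\le C h^{2}=O(h^{2})$.

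The main obstacle is the cancellation of the $O(h)$ terms in the off-diagonal blocks of $\Psi_{n}^{\mathrm T}J\Psi_{n}$: the block $\partial_{Q_{n}}P_{n+1}=-\alpha(G_{1}+G_{2})+O(h^{2})$ is only $O(h)$, so its leading part must annihilate against the $O(h)$ pieces produced by $\partial_{P_{n}}Q_{n+1}$ and $\partial_{Q_{n}}Q_{n+1}$; this is precisely where the hypothesis that $\partial_{1}\bar{\nabla}U_{0}$ and $\partial_{2}\bar{\nabla}U_{0}$ are symmetric is indispensable, for without it a skew-symmetric term of order $h$ would survive and ruin the claimed order. Once that cancellation has been checked carefully — the remaining Taylor expansions and the tracking of bounded matrices being routine — the pathwise $O(h^{2})$ bound, and with it the root-mean-square statement, is immediate.
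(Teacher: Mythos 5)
Your proposal is correct and follows essentially the same route as the paper: compute the one-step Jacobian blocks by implicit differentiation, exploit the symmetry of $M$ and of $\partial_i\bar{\nabla}U_0$ together with the key smallness $\frac{\bar{\nu}-h}{\nu}=O(h^2)$ and $\bar{\nu}^2=O(h^2)$, and verify the symplectic-form identity block by block (the paper packages the inversion of $I-\frac{\bar{\nu}-h}{\nu}M^{-1}\partial_2\bar{\nabla}U_0$ via the Sherman--Morrison--Woodbury formula and a $(1-x)^{-1}$ expansion, where you use an equivalent Neumann-series argument). Your observation that the bound on $R^{CS}$ is in fact pathwise with a deterministic constant — because the noise enters (\ref{lang:eq-5}) additively and the $\partial_i\bar{\nabla}U_0$ are uniformly bounded — is a slight strengthening of the stated root-mean-square estimate.
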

\begin{proof}
For convenience, denote $\bar{\nabla} U_0 (Q_n,Q_{n+1})$ and $\partial_i \bar{\nabla} U_0 (Q_n,Q_{n+1})$ ($i=1,2$) by $\bar{\nabla} U_0$ and $\partial_i \bar{\nabla} U_0 $, respectively, and define $$\bar{\nu} = \frac{1-\exp(- \nu h)}{\nu}.$$ In fact, based on the definition of a SDG, if one of  $\partial_1\bar{\nabla}U_0$ and $\partial_2\bar{\nabla}U_0$ is symmetric then the other is also symmetric. By direct calculations and the symmetry of the positive definite matrix $M$, we have
\begin{equation}\label{addpar}
\begin{split}
\frac{\partial P_{n+1}}{\partial P_n}^T\frac{\partial Q_{n+1}}{\partial P_n}-\frac{\partial Q_{n+1}}{\partial P_n}^T\frac{\partial P_{n+1}}{\partial P_n}&=0,\\
\frac{\partial P_{n+1}}{\partial Q_n}^T\frac{\partial Q_{n+1}}{\partial Q_n}-\frac{\partial Q_{n+1}}{\partial Q_n}^T\frac{\partial P_{n+1}}{\partial Q_n}&=O(h^3).
\end{split}
\end{equation}
It should be noted that the derivation of (\ref{addpar}) is based on the facts that
$$\frac{\partial{Q_{n+1}}}{\partial{P_n}} = \bar{\nu} \left( M - \frac{\bar{\nu} - h}{\nu}\partial_2\bar{\nabla} U_0\right)^{-1}\quad \mbox{is symmetric},$$
\begin{equation}\label{addqq}
\frac{\partial{Q_{n+1}}}{\partial{Q_n}} = \left( I - \frac{\bar{\nu} - h}{\nu} M^{-1} \partial_2\bar{\nabla} U_0 \right)^{-1}
	 \left( I + \frac{\bar{\nu} - h}{\nu} M^{-1} \partial_1\bar{\nabla} U_0 \right),
\end{equation}
and the Sherman-Morrison-Woodbury formula
\begin{equation*}
(A - U C V)^{-1} = A^{-1} - A^{-1} U (C^{-1} + V A^{-1} U)^{-1} V A^{-1},
\end{equation*}
where $A$, $U$, $C$ and $V$ are matrices of conformable sizes. Moreover, since $\frac{\bar{\nu}-h}{\nu}=O(h^2)$, the invertibility of $I - \frac{\bar{\nu} - h}{\nu} M^{-1} \partial_2\bar{\nabla} U_0 $ and therefore also $ M - \frac{\bar{\nu} - h}{\nu}\partial_2\bar{\nabla} U_0$ can be guaranteed by sufficiently small $h$, the boundedness of $\partial_2\bar{\nabla}U_0$, and the non-singularity of $M$.

Consequently, (\ref{lang:eq-6}) holds if and only if
\begin{equation}\label{lang:eq-7}
{\frac{\partial{P_{n+1}}}{\partial{P_n}}}^{\mathrm{T}} \frac{\partial{Q_{n+1}}}{\partial{Q_n}} - {\frac{\partial{Q_{n+1}}}{\partial{P_n}}}^{\mathrm{T}} \frac{\partial{P_{n+1}}}{\partial{Q_n}} ={} \exp(- \nu h) I + \bar{R}^{CS},
\end{equation}
where the remainder matrix $\bar{R}^{CS}$ satisfies $(E|\bar{R}^{CS}|^2)^{\frac{1}{2}}\le O(h^2)$.
For brevity, we denote the left part of (\ref{lang:eq-7}) by $\bar{I}$. Then by (\ref{lang:eq-5}), we have
\begin{equation}\label{addfinal}
\bar{I} ={} \exp(- \nu h) \frac{\partial{Q_{n+1}}}{\partial{Q_n}}
	 + \bar{\nu}^2 \left( I - \frac{\bar{\nu} - h}{\nu}M^{-1} \partial_2\bar{\nabla} U_0 \right)^{-1} M^{-1}\partial_1\bar{\nabla} U_0.
\end{equation}
Substituting (\ref{addqq}) into (\ref{addfinal}), and performing Taylor expansion of the matrix function $$\left( I - \frac{\bar{\nu} - h}{\nu}M^{-1} \partial_2\bar{\nabla} U_0 \right)^{-1}$$ according to the rule of expanding  $(1-x)^{-1}$,  we conclude that (\ref{lang:eq-7}) holds.
\end{proof}


\section{Numerical experiments}
\label{sec:numer}
In this section, we verify the behavior of the SEDG schemes via numerical experiments on different models, in particular we exam their convergence and structure-preserving properties, and compare them with other existing numerical methods.

Throughout the numerical experiments, we simulate the reference exact solutions of the model systems either by plotting the analytical solution if it exists, or by numerical realizations on tiny time step sizes, if the system has no analytical solution.

\subsection{A stochastic Hamiltonian oscillator with high frequency}
For the highly oscillatory stochastic Hamiltonian system (\ref{exam-4:eq-1})
\begin{equation*}
{\rm d} \begin{pmatrix}
      x^1 \\
      x^2
      \end{pmatrix} ={} \begin{pmatrix}
      0 & -\omega^2 \\
      1 & 0
      \end{pmatrix} \begin{pmatrix}
      x^1 \\
      x^2
      \end{pmatrix} {\rm d}t + \begin{pmatrix}
      \sigma \\
      0
      \end{pmatrix} \circ {\rm d}W(t), \quad \begin{pmatrix}
      x^1(0) \\
      x^2(0)
      \end{pmatrix} =\begin{pmatrix}
      x^1_0\\
      x^2_0
      \end{pmatrix} ,
      \end{equation*}
we implement the SEDG scheme (\ref{exam-4:eq-2}) and the symplectic Euler-Maruyama (SEM) method (\ref{addsem}), and illustrate their numerical performance.
  \begin{figure}[htbp]
  \centering
  \label{exam-4:fig-1-a}\includegraphics[width=0.45\textwidth]{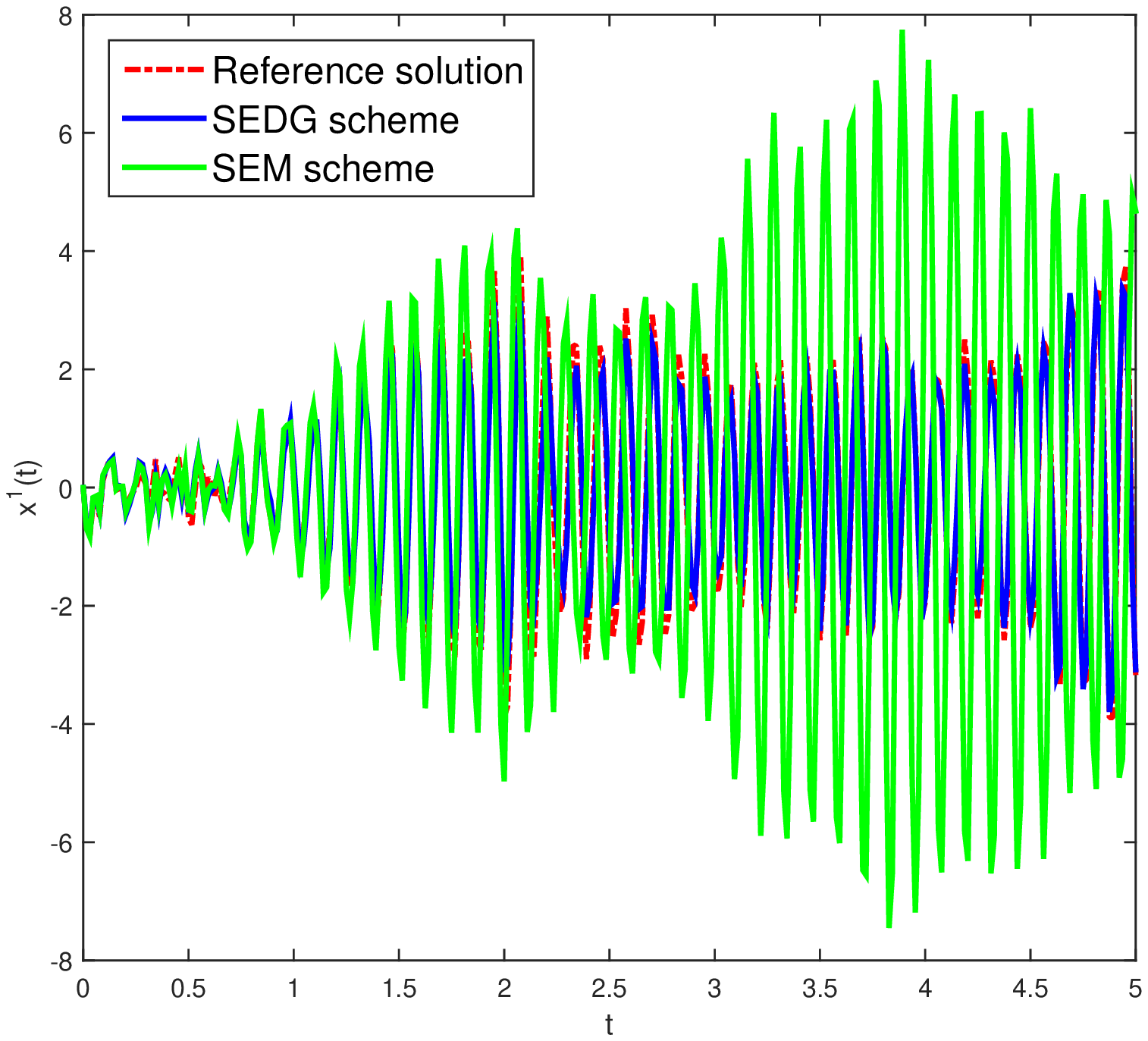}
  \label{exam-4:fig-1-b}\includegraphics[width=0.45\textwidth]{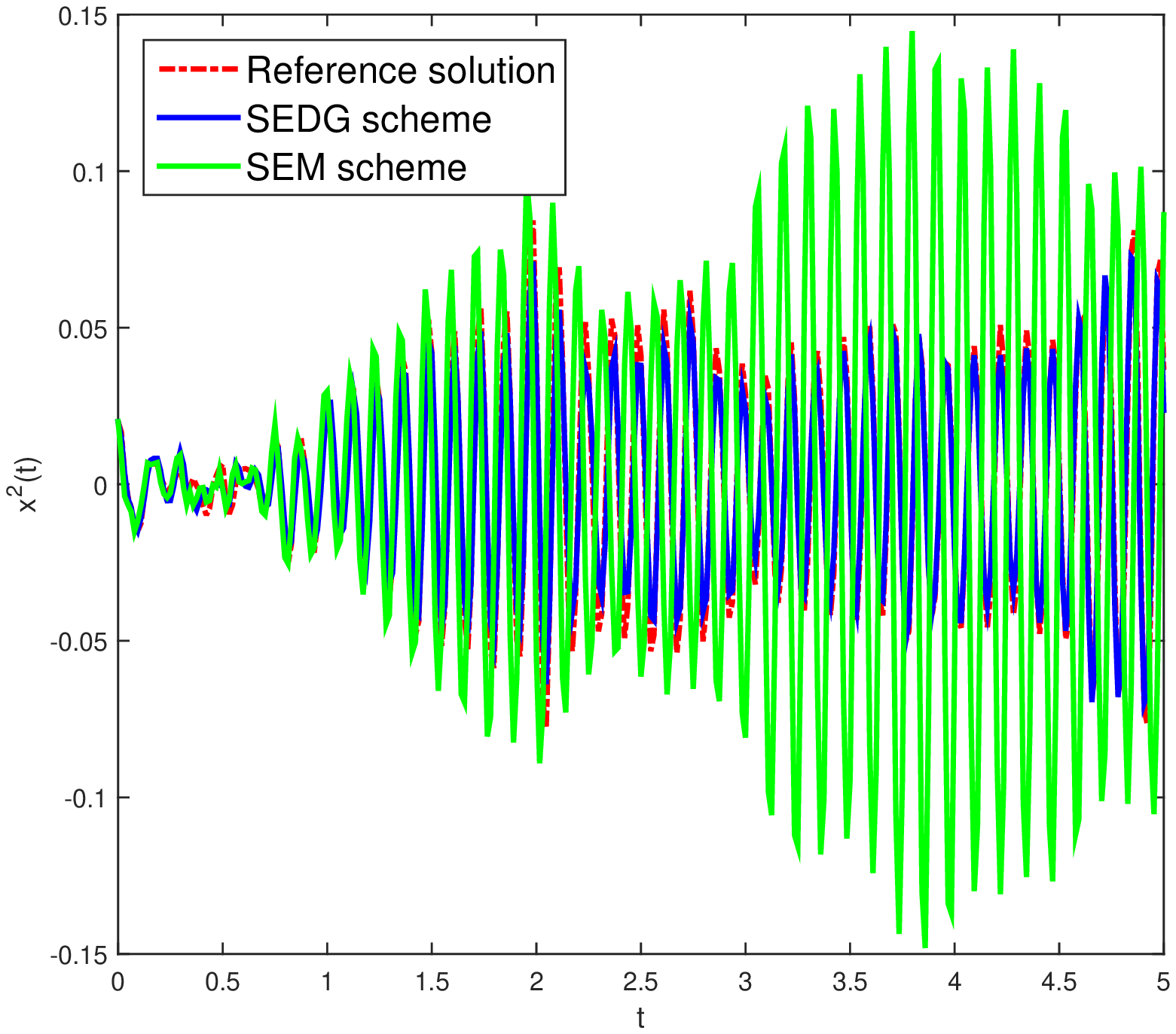}
  \caption{A sample path of $x^1(t)$ (left) and $x^2(t)$ (right) arising from the SEDG scheme (\ref{exam-4:eq-2}) (blue), the SEM scheme (\ref{addsem}) (green), and the exact solution (red). }
  \label{exam-4-fig-1}
\end{figure}
The effect of the two schemes are obvious from Fig. \ref{exam-4-fig-1}, where the sample path produced by the SEDG scheme visually coincides with that of the exact solution, while the SEM method produces large error, due to stiffness of the system, for which we take $\omega = 50$. The high frequency of the oscillation is also observed. The data setting is $\sigma = 2$, $t \in [0,5]$, $x_0 = (0,0.02)^{\mathrm{T}}$, and $h = 2^{-6}$.

\begin{figure}[htbp]
  \centering
  \subfloat[$\omega = 5$]{
   \label{exam-4:fig-2-a}\includegraphics[width=0.45\textwidth]{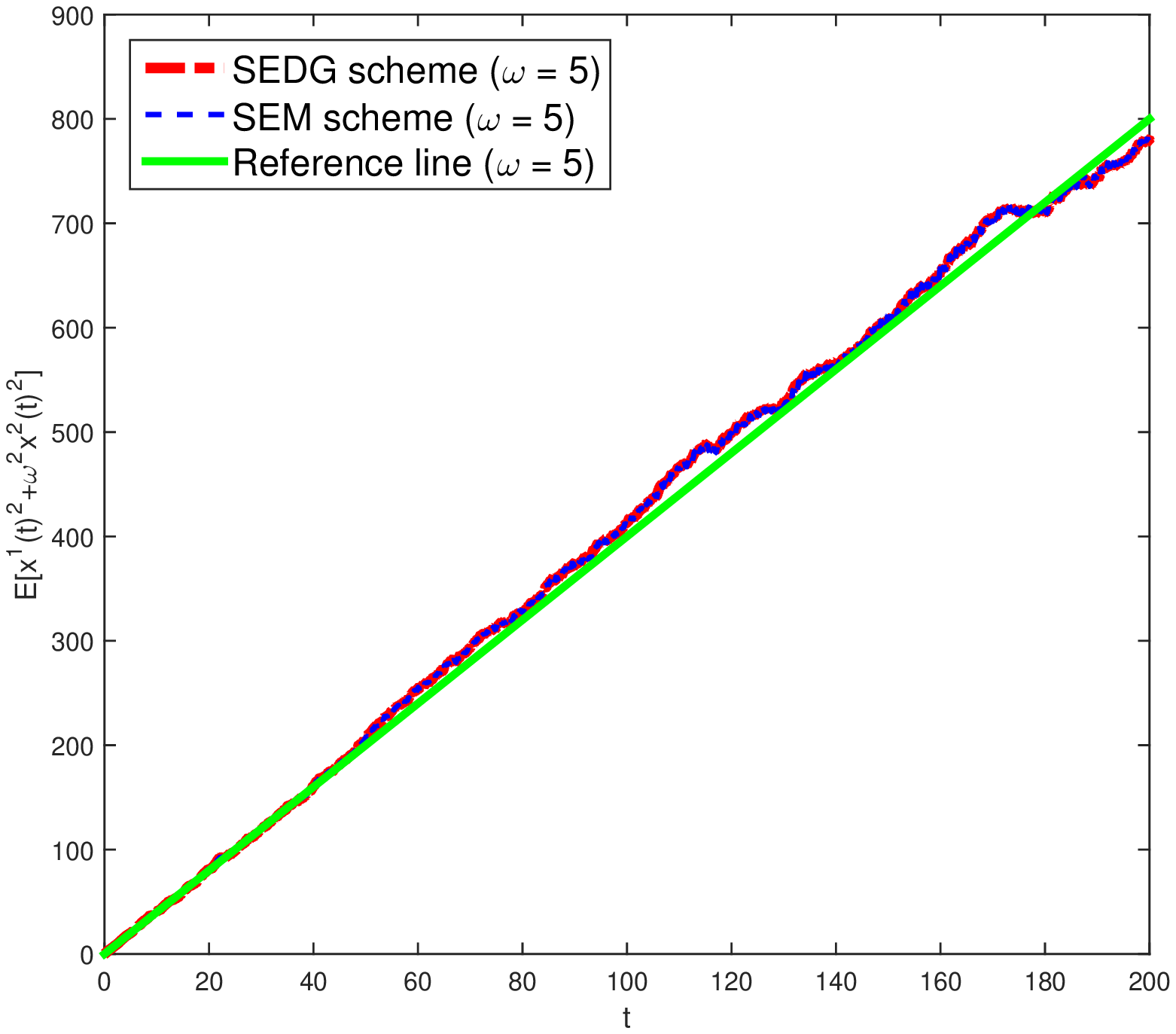}}
  \subfloat[$\omega = 50$]{
   \label{exam-4:fig-2-b}\includegraphics[width=0.45\textwidth]{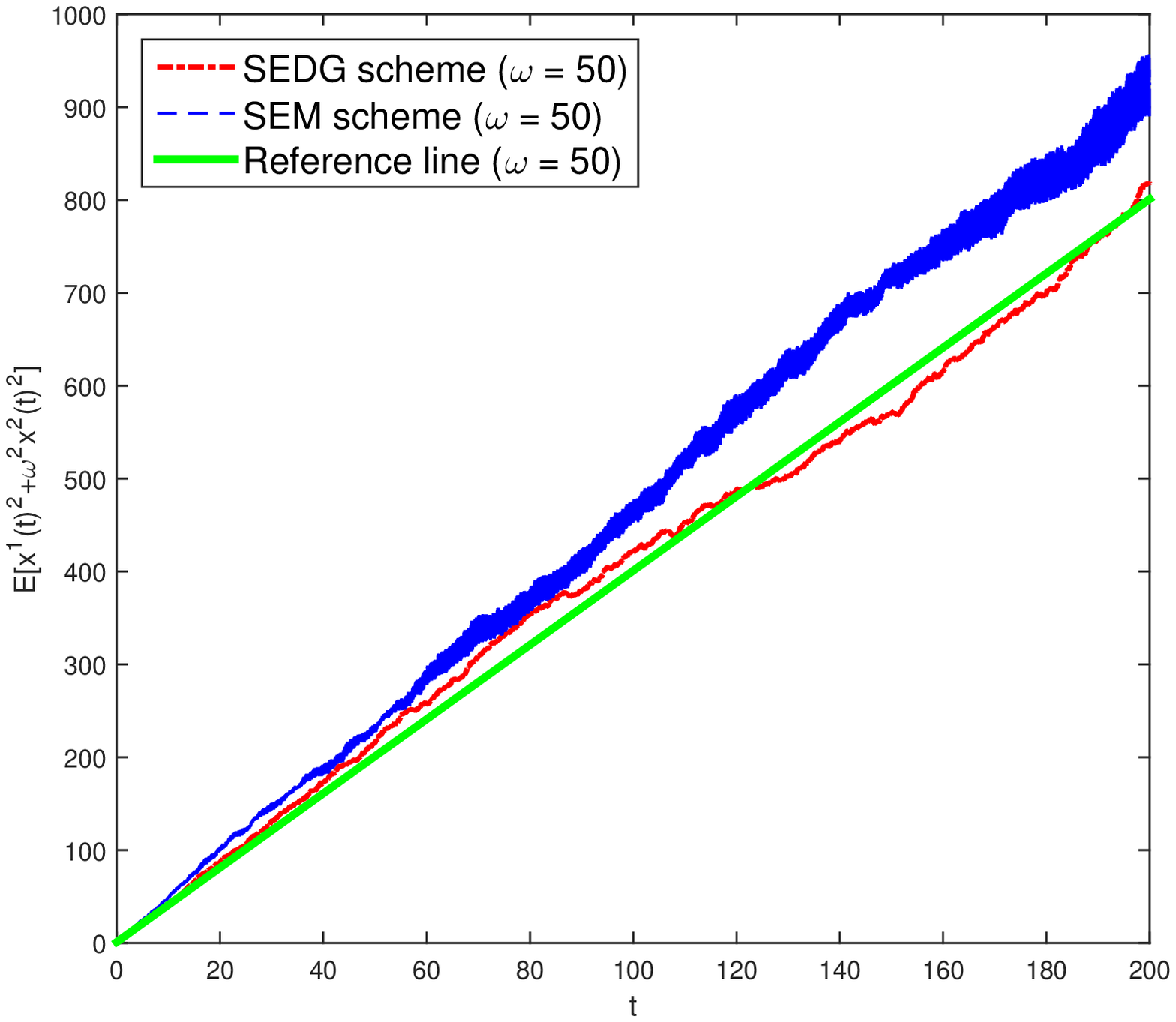}}
  \caption{Linear growth of $E(H_1)$ by the SEDG scheme (\ref{exam-4:eq-2}) (red),  the SEM scheme (\ref{addsem}) (blue), and the exact solution (green) when $\omega = 5$ (left) and $\omega = 50$ (right).}
  \label{exam-4-fig-2}
\end{figure}

For the lower frequency $\omega=5$, both the SEDG scheme and SEM scheme can well preserve the linear growth of $E(H_1)$, as illustrated by the left panel of Fig. \ref{exam-4-fig-2} . For a larger frequency $\omega=50$, however, the symplectic method fails to preserve the property with a reasonable growth rate,  while the SEDG scheme still behaves fairly well, as can be seen from the right panel. The data setting here is the same with that for Fig. \ref{exam-4-fig-1}.




\subsection{A stochastic Poisson system}
Let us consider the following SDE with a multiplicative noise
\begin{equation}\label{exam-1:eq-1}
{\rm d} \begin{pmatrix}
      x^1 \\
      x^2
      \end{pmatrix} ={} \begin{pmatrix}
      0 & -1 \\
      1 & 0
      \end{pmatrix} \left[ \begin{pmatrix}
      \lambda & 0 \\
      0 & \lambda
      \end{pmatrix} \begin{pmatrix}
      x^1 \\
      x^2
      \end{pmatrix} + \begin{pmatrix}
      \frac{1}{2}(({x^1})^2 - {(x^2})^2) \\
      - x^1 x^2
      \end{pmatrix} \right] ( {\rm d}t + \sigma \circ {\rm d}W(t) ),
\end{equation}
where $\sigma \ge 0$ is a constant.
When $\sigma = 0$, (\ref{exam-1:eq-1}) is called an averaged system in wind-induced oscillation,
with $\lambda$ being a detuning parameter
(see e.g.,  \cite{guckenheimer2013nonlinear}, \cite{melbo2004numerical}). The system (\ref{exam-1:eq-1}) can be of the form of (\ref{intro:poisson}), where
\begin{equation}\label{exam-1:eq-2}
Q ={}  - J, \quad M = \lambda I, \quad U ={}  - \frac{1}{2} ( x^1 ({x^2})^2 - \frac{1}{3} ({x^1})^3 ).
\end{equation}

In our experiment, we choose $\lambda = 1$, $\sigma > 0$.  Then the invariant energy is
\begin{equation*}
\mathcal{H}=\frac{({x^1})^2 + ({x^2})^2}{2} - \frac{x^1 ({x^2})^2 - \frac{1}{3} ({x^1})^3}{2},
\end{equation*}
and the SEDG scheme (\ref{struct:eq-3}) for (\ref{exam-1:eq-1}) is
\begin{equation}\label{exam-1:eq-3}
\begin{split}
\begin{pmatrix}
      x^1_{n+1} \\
      x^2_{n+1}
      \end{pmatrix}
      & ={} \begin{pmatrix}
      \cos(h + \sigma \Delta W_n) & -\sin(h + \sigma \Delta W_n) \\
      \sin(h + \sigma \Delta W_n) & \cos(h + \sigma \Delta W_n)
      \end{pmatrix} \begin{pmatrix}
      x^1_{n} \\
      x^2_{n}
      \end{pmatrix} \\
      & + \begin{pmatrix}
      \cos(h + \sigma \Delta W_n)-1 & -\sin(h + \sigma \Delta W_n) \\
      \sin(h + \sigma \Delta W_n) & \cos(h + \sigma \Delta W_n)-1
      \end{pmatrix}\\
      &\,\,\,\,\begin{pmatrix}
      \frac{1}{6}(({x^1_n})^2 + x^1_n x^1_{n+1} + ({x^1_{n+1}})^2) - \frac{1}{4} (({x^2_n})^2 + ({x^2_{n+1}})^2) \\
      -\frac{1}{4} (x^1_{n}+x^1_{n+1}) (({x^2_n})^2 + ({x^2_{n+1}})^2)
      \end{pmatrix}.
\end{split}
\end{equation}

\begin{figure}[htbp]
  \centering
  \label{exam-1:fig-1-b}\includegraphics[width=0.45\textwidth]{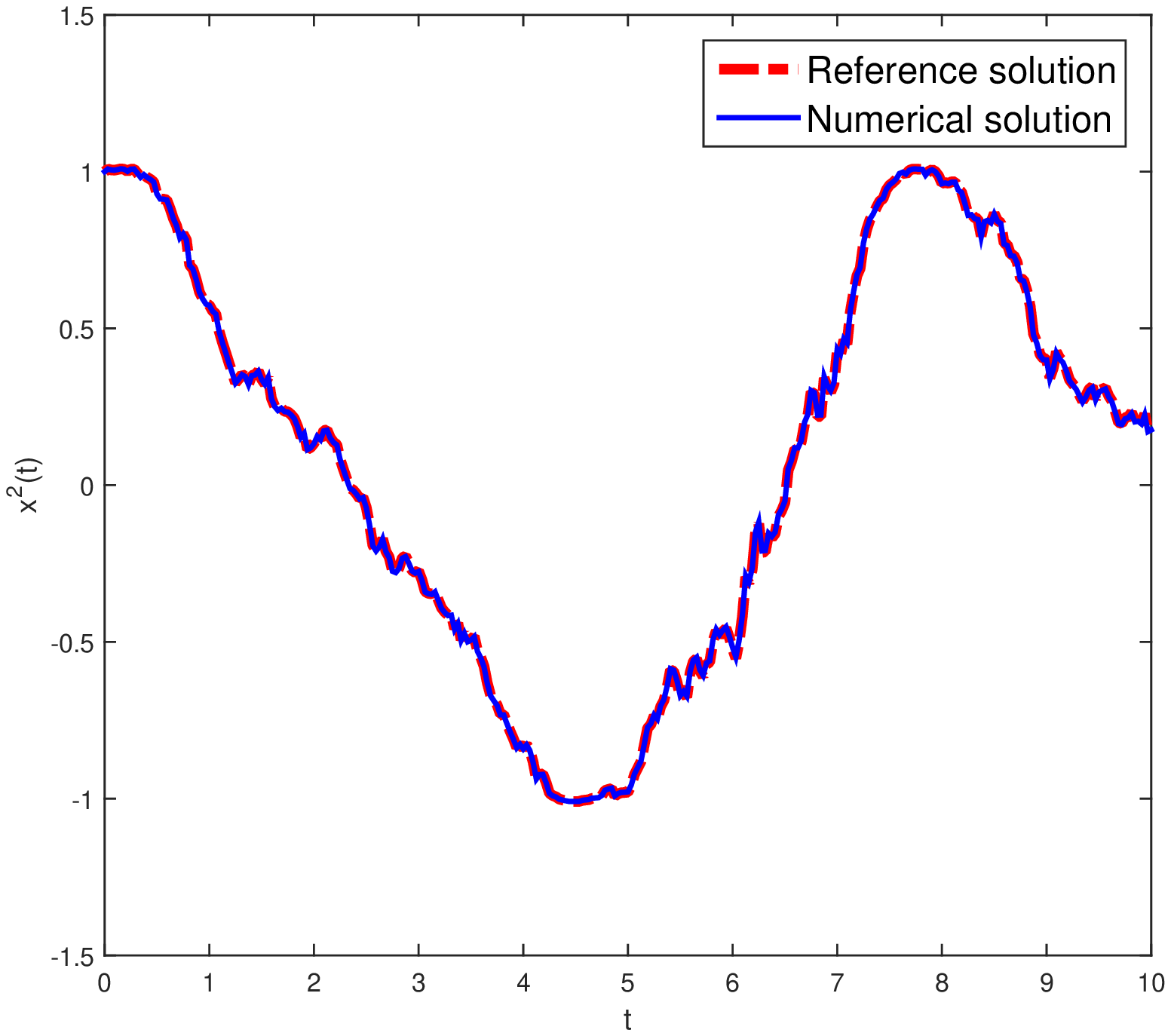}
  \label{exam-1:fig-1-c}\includegraphics[width=0.45\textwidth]{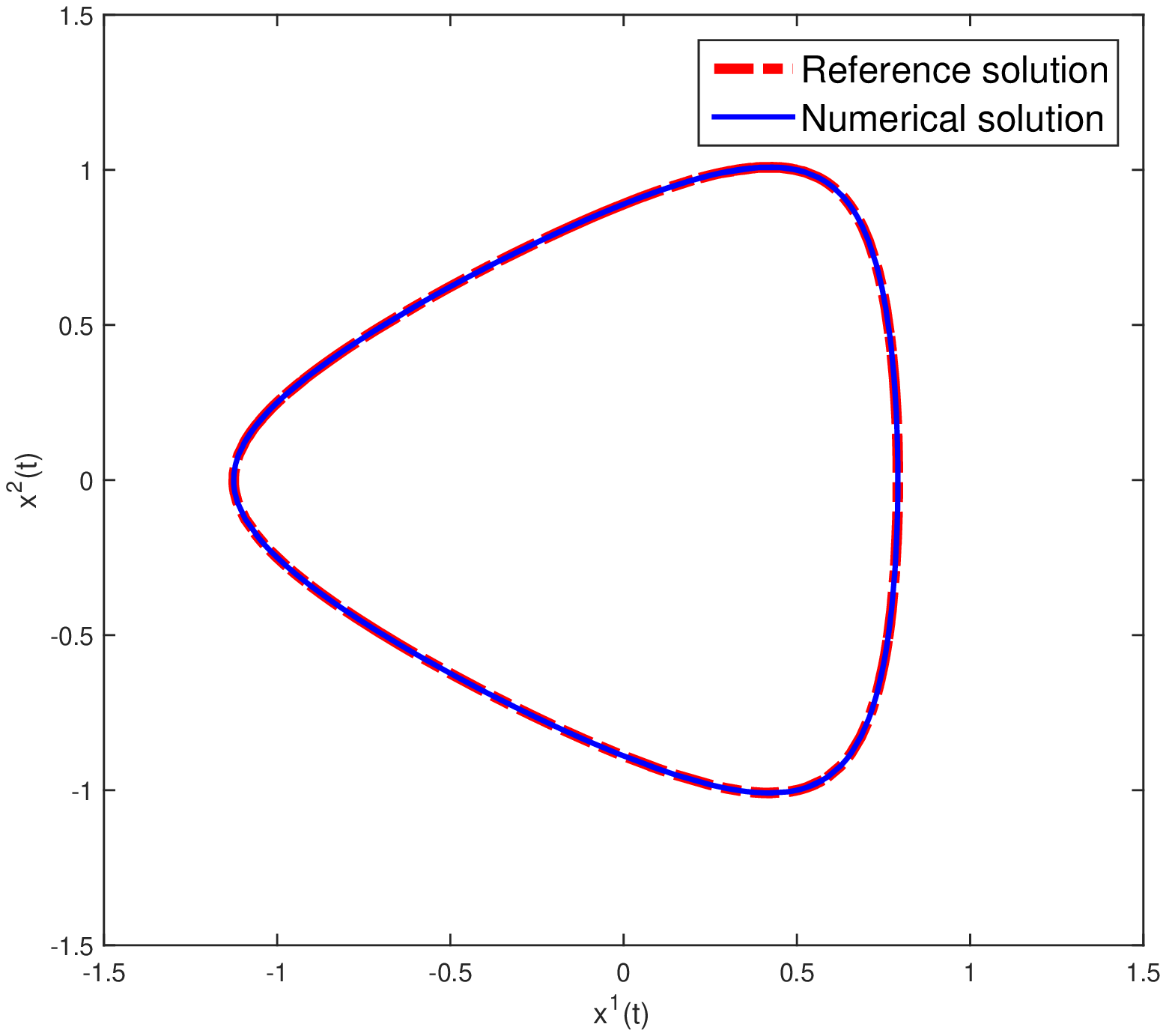}
  \caption{A sample path of $x^2(t)$  (left),  and a sample phase trajectory (right) produced by the scheme (\ref{exam-1:eq-3}) for the system (\ref{exam-1:eq-1}). }
  \label{exam-1-fig-1}
\end{figure}

Fig. \ref{exam-1-fig-1} compares the numerical sample paths arising from the scheme (\ref{exam-1:eq-3}) with the reference exact solution. Good coincidence is observed, showing the accuracy of the method. Here we choose $t \in [0,10]$, $\sigma = 0.3$, $x_0 = (0.1,1.0)^{\mathrm{T}}$ and the step size $h = 2^{-5}$.

\begin{figure}[htbp]
  \centering
  \includegraphics[width=0.9\textwidth,height=0.45\textwidth]{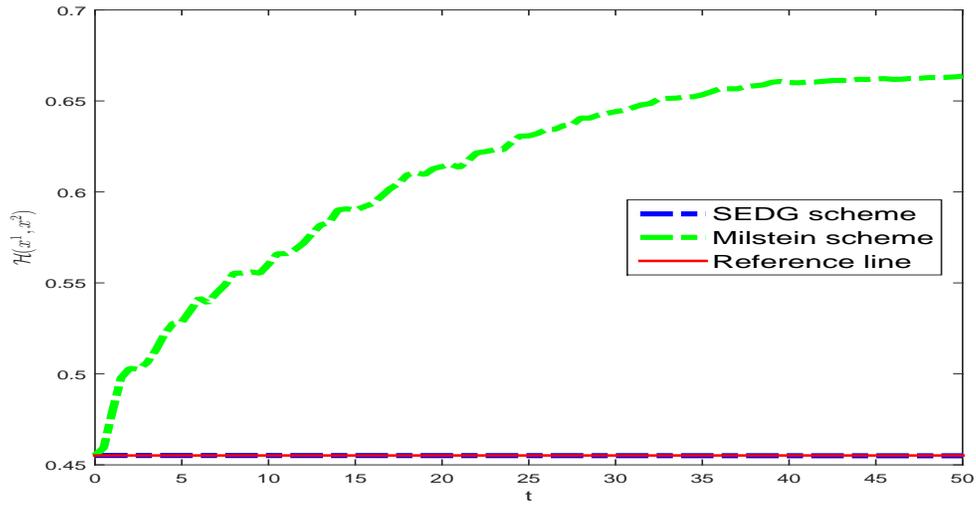}
  \caption{Preservation of the energy $\mathcal{H}$ by the exact solution (red), the SEDG method (\ref{exam-1:eq-3}) (blue), and the Milstein scheme (green).}
  \label{exam-1-fig-2}
\end{figure}

Fig. \ref{exam-1-fig-2} is devoted to show and compare the energy-preserving property of the exact solution, the SEDG method (\ref{exam-1:eq-3}) and the Milstein scheme. As can be seen from the figure, the SEDG scheme can well preserve the invariant quantity $\mathcal{H}$ of the exact solution, while the Milstein scheme fails to preserve $\mathcal{H}$. Here we take $t \in [0,50]$, $x_0 = (0.1,1.0)^{\mathrm{T}}$. The step size is $h = 2^{-4}$, and $\sigma = 0.3$.

\begin{figure}[htbp]
  \centering
  \includegraphics[width=0.9\textwidth,height=0.45\textwidth]{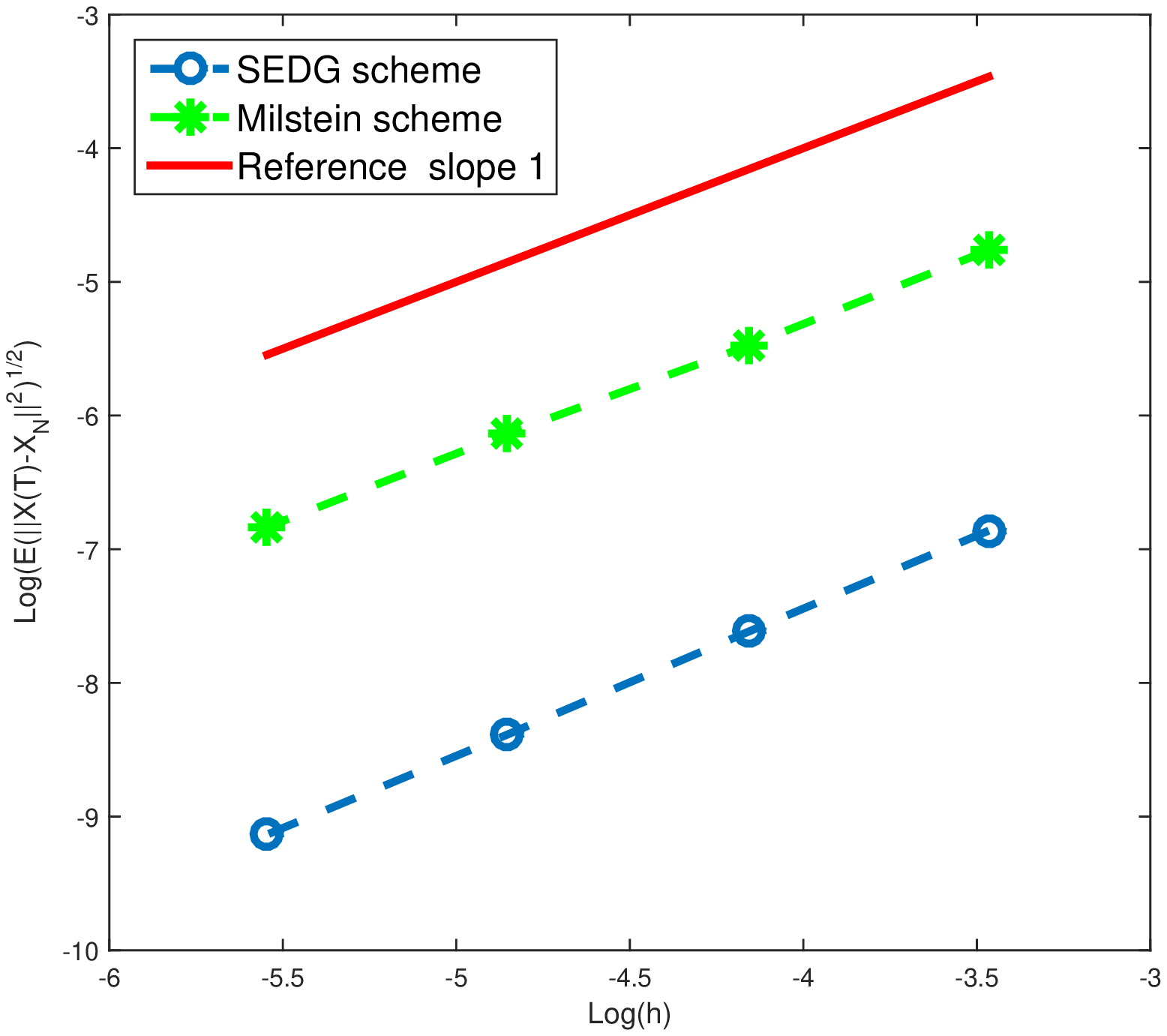}
  \caption{Root mean-square convergence orders of the SEDG scheme (\ref{exam-1:eq-3}) and the Milstein scheme in simulating (\ref{exam-1:eq-1}) . }
  \label{exam-1-fig-3}
\end{figure}

The root mean-square convergence orders of the SEDG scheme (\ref{exam-1:eq-3}) and the Milstein scheme for simulating the system (\ref{exam-1:eq-1}) are illustrated in Fig. \ref{exam-1-fig-3}, which are both 1, while obviously the error of our SEDG scheme is smaller that that of the Milstein scheme.  Here we take $x_0 = (0.1,1.0)^{\mathrm{T}}$, $\sigma = 0.3$, and calculate the error at $T = 1$. For approximating the expectation 1000 trajectories are sampled.
\subsection{A linear oscillator with damping}
Consider the $2$-dimensional stochastic Langevin-type equation
\begin{equation}\label{exam-2:eq-1}
{\rm d} \begin{pmatrix}
      x^1 \\
      x^2
      \end{pmatrix} ={} \begin{pmatrix}
      - \nu & 0 \\
      1 & 0
      \end{pmatrix} \begin{pmatrix}
      x^1 \\
      x^2
      \end{pmatrix} {\rm d}t + \begin{pmatrix}
      0 & -1 \\
      1 & 0
      \end{pmatrix} \begin{pmatrix}
      0 \\
      x^2
      \end{pmatrix} {\rm d}t + \begin{pmatrix}
      \sigma \\
      0
      \end{pmatrix} \circ {\rm d}W(t),
\end{equation}
where $\nu \ge 0$ and $\sigma \not = 0$ are constants. It is the equation (\ref{lang:eq-1}) when
$M = I$, $U_0 ={} \frac{1}{2} ({x^2})^2$.

According to (\ref{lang:eq-5}), the SEDG scheme for (\ref{exam-2:eq-1}) is
\begin{equation}\label{exam-2:eq-2}
\begin{split}
 x^1_{n+1} ={} & \exp(- \nu h) x^1_{n} - \frac{1-\exp(- \nu h)}{\nu} \frac{x^2_n,x^2_{n+1}}{2}
 	     +  \exp(- \frac{\nu h}{2}) \sigma \Delta W_n, \\
 x^2_{n+1} ={} & Q_{n} + \frac{1-\exp(- \nu h)}{\nu} x^1_{n}
 	      +  \frac{1}{\nu} \left( \frac{1-\exp(- \nu h)}{\nu} - h \right) \frac{x^2_n,x^2_{n+1}}{2} \\
 	      + & \frac{1-\exp(- \frac{\nu h}{2})}{\nu} \sigma \Delta W_n.
\end{split}
\end{equation}
\begin{figure}[htbp]
  \centering
  \label{exam-2:fig-1-b}\includegraphics[width=0.45\textwidth]{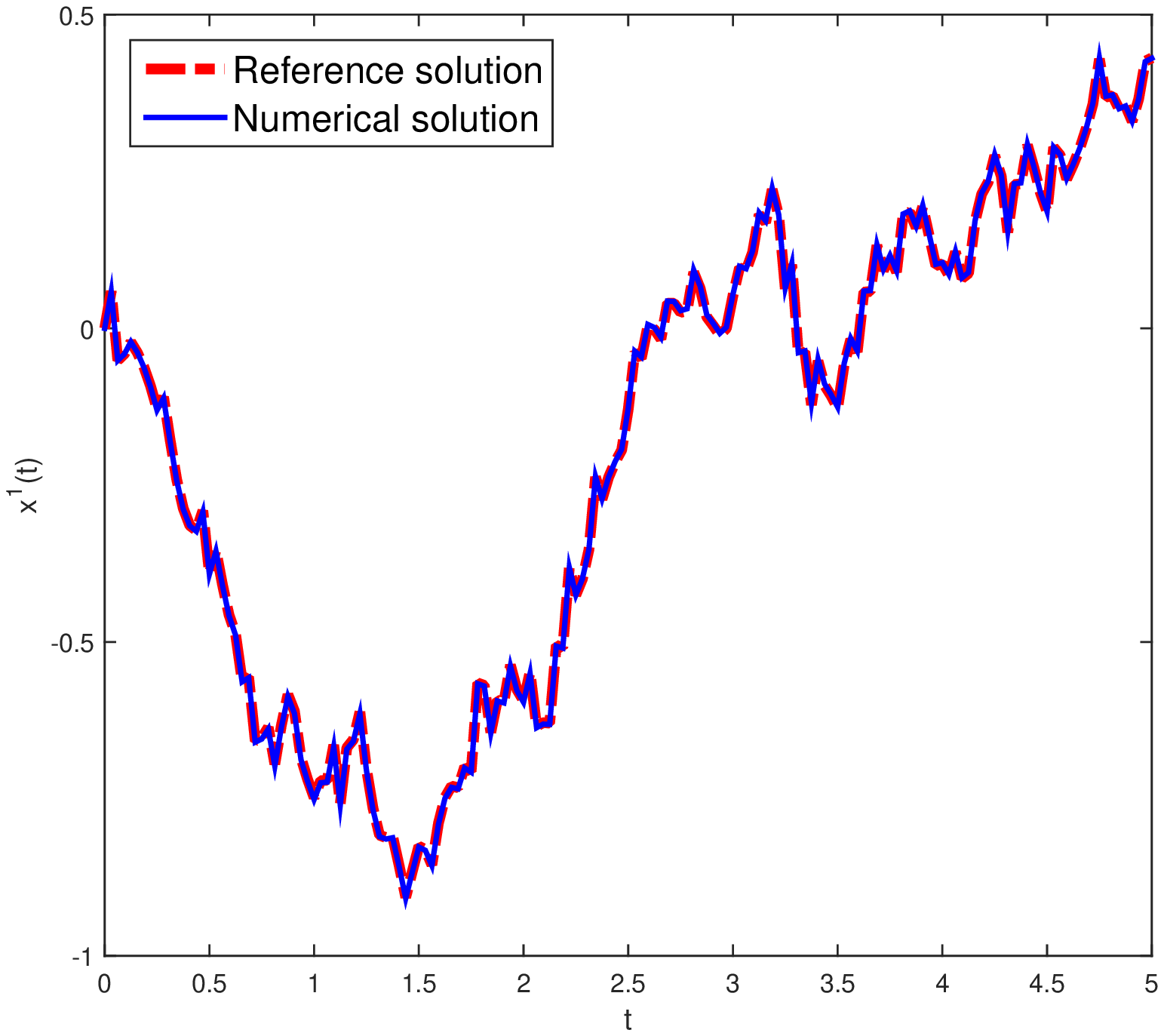}
  \label{exam-2:fig-1-c}\includegraphics[width=0.45\textwidth]{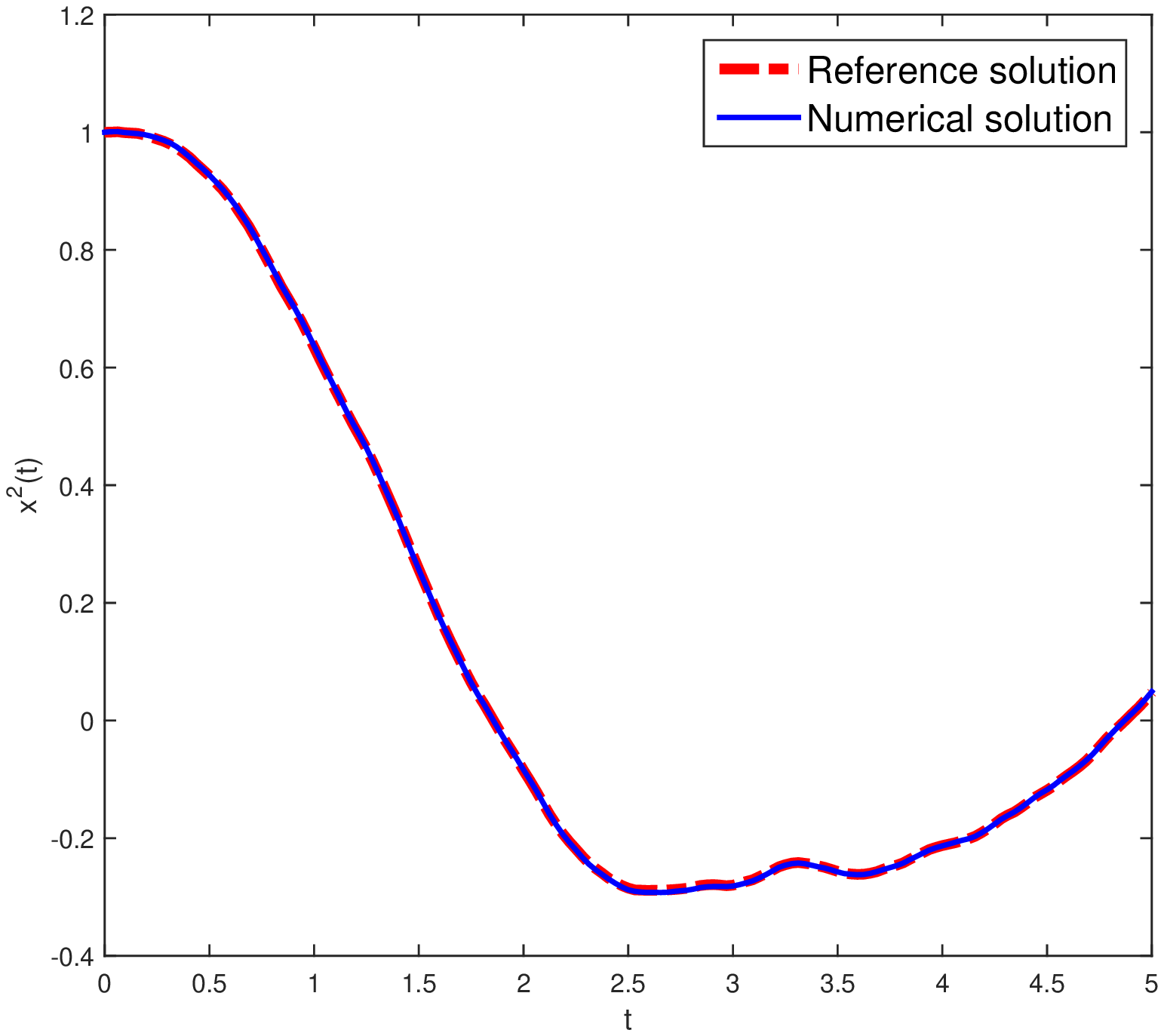}
  \caption{A sample path of $x^1(t)$ (left) and $x^2(t)$ (right) produced by (\ref{exam-2:eq-2}) for the system (\ref{exam-2:eq-1}). }
  \label{exam-2-fig-1}
\end{figure}

Fig. \ref{exam-2-fig-1} shows the path-wise simulation effect of the SEDG scheme (\ref{exam-2:eq-2}). The numerical sample paths of both components $x^i(t)$ ($i=1,2$) are visually coincident with the corresponding reference exact solution curves. We take $t \in [0,10]$, $\sigma = 0.3$ and $\nu = 1$. The initial value is $x_0 = (0,1)^{\mathrm{T}}$, and the step size is $h = 2^{-5}$.
\begin{figure}[htbp]
  \centering
  \subfloat[$\nu = 1$]{
   \label{exam-2:fig-2-a}\includegraphics[width=0.45\textwidth]{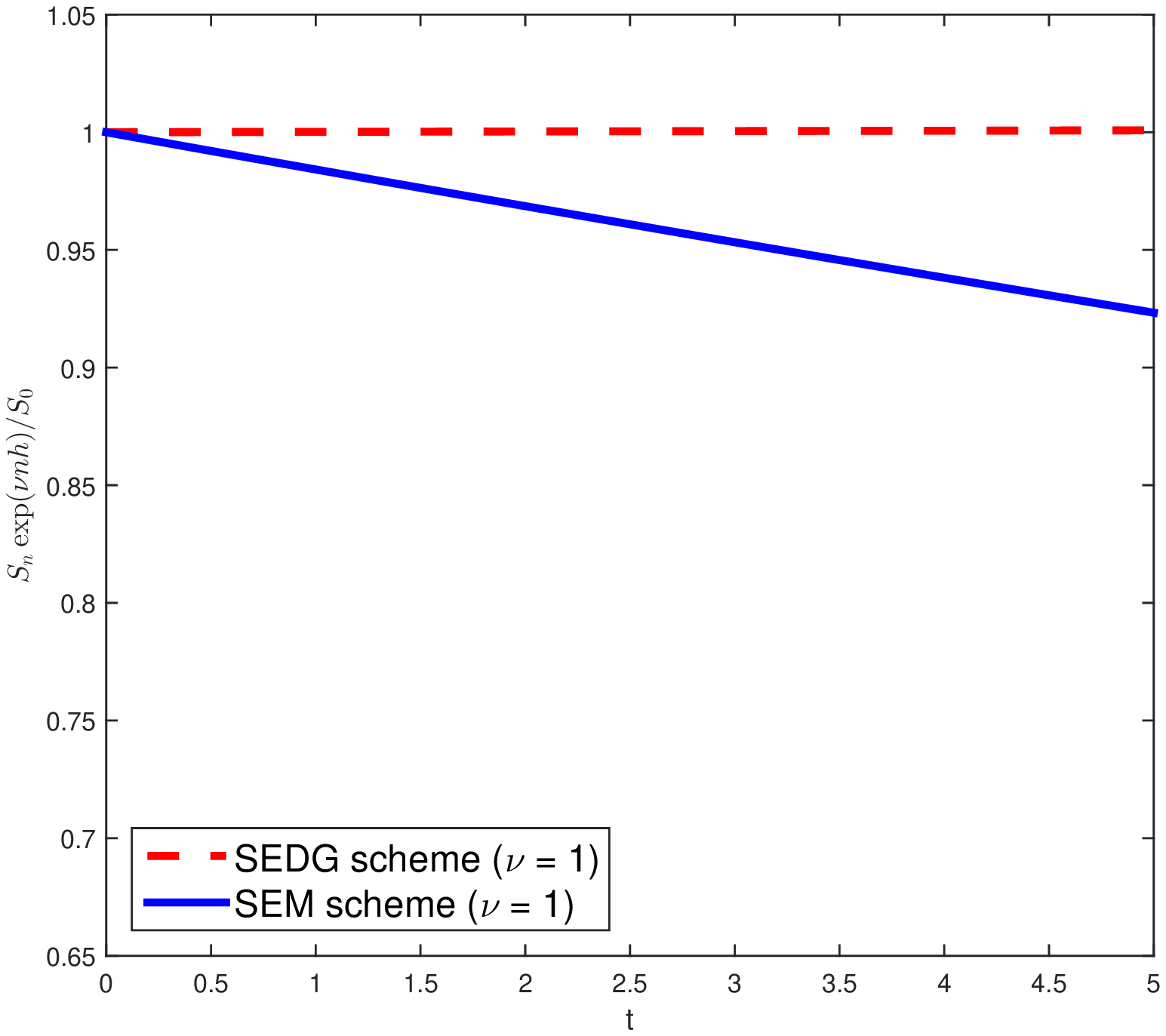}}
  \subfloat[$\nu = 2$]{
   \label{exam-2:fig-2-b}\includegraphics[width=0.45\textwidth]{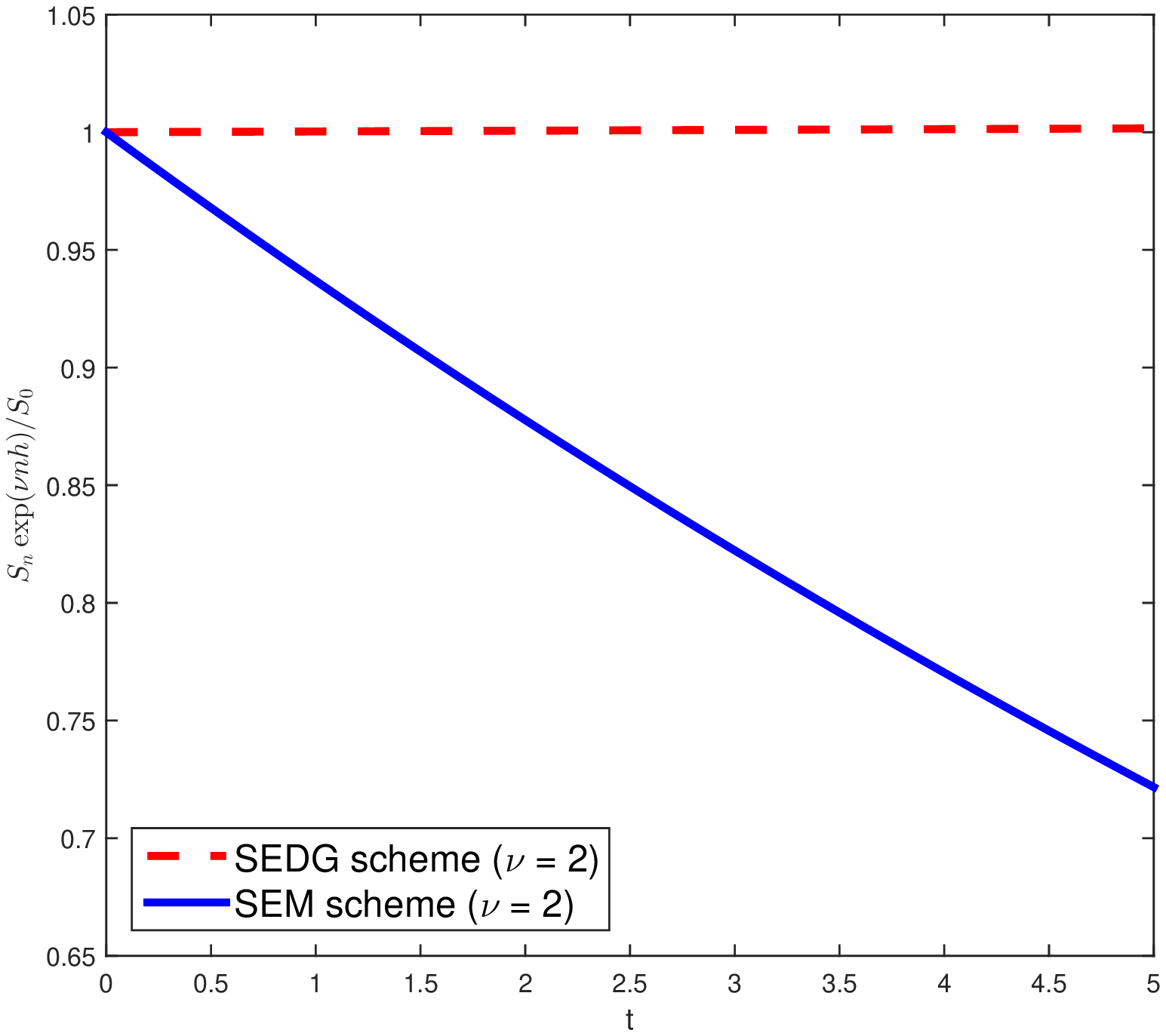}}
  \caption{Evolution of the quantity $\frac{S_n \exp(\nu t_n)}{S_0}$ arising from the SEDG scheme (\ref{exam-2:eq-2}) and the SEM scheme, with parameter $\nu = 1$ (left) and $\nu = 2$ (right).}
  \label{exam-2-fig-2}
\end{figure}

Geometrically, the conformal symplecticity of the system (\ref{exam-2:eq-1}) implies that, the area of a initial triangle $S_0$ in the phase space should decay exponentially along the flow with the evolution of time, that is, $S_n=\exp(-\nu t_n)S_0$, where $S_n$ denotes the area of the triangle at time $t_n$. In other words, the value $\frac{S_n \exp(\nu t_n)}{S_0}$ should remain at 1 along the exact flow. Fig. \ref{exam-2-fig-2} illustrates the evolution of the quantity $\frac{S_n \exp(\nu t_n)}{S_0}$ along the numerical flows produced by the SEDG scheme (\ref{exam-2:eq-2}) and by the symplectic Euler-Maruyama (SEM) scheme, for $\nu=1$ and $\nu=2$, respectively, on the time interval $[0,5]$. It can be seen that, the SEDG scheme (\ref{exam-2:eq-2}) can preserve the conformal symplecticity with high accuracy, while the SEM fails to preserve this structure. The points for the initial triangle are $x_0 = (-1,0)^{\mathrm{T}}, (0,1)^{\mathrm{T}}, (1,0)^{\mathrm{T}}$, and we take $\sigma = 0.3$, $h = 2^{-5}$.

\begin{figure}[htbp]
  \centering
  \subfloat[$\nu = 1$]{
   \label{exam-2:fig-3-a}\includegraphics[width=0.45\textwidth]{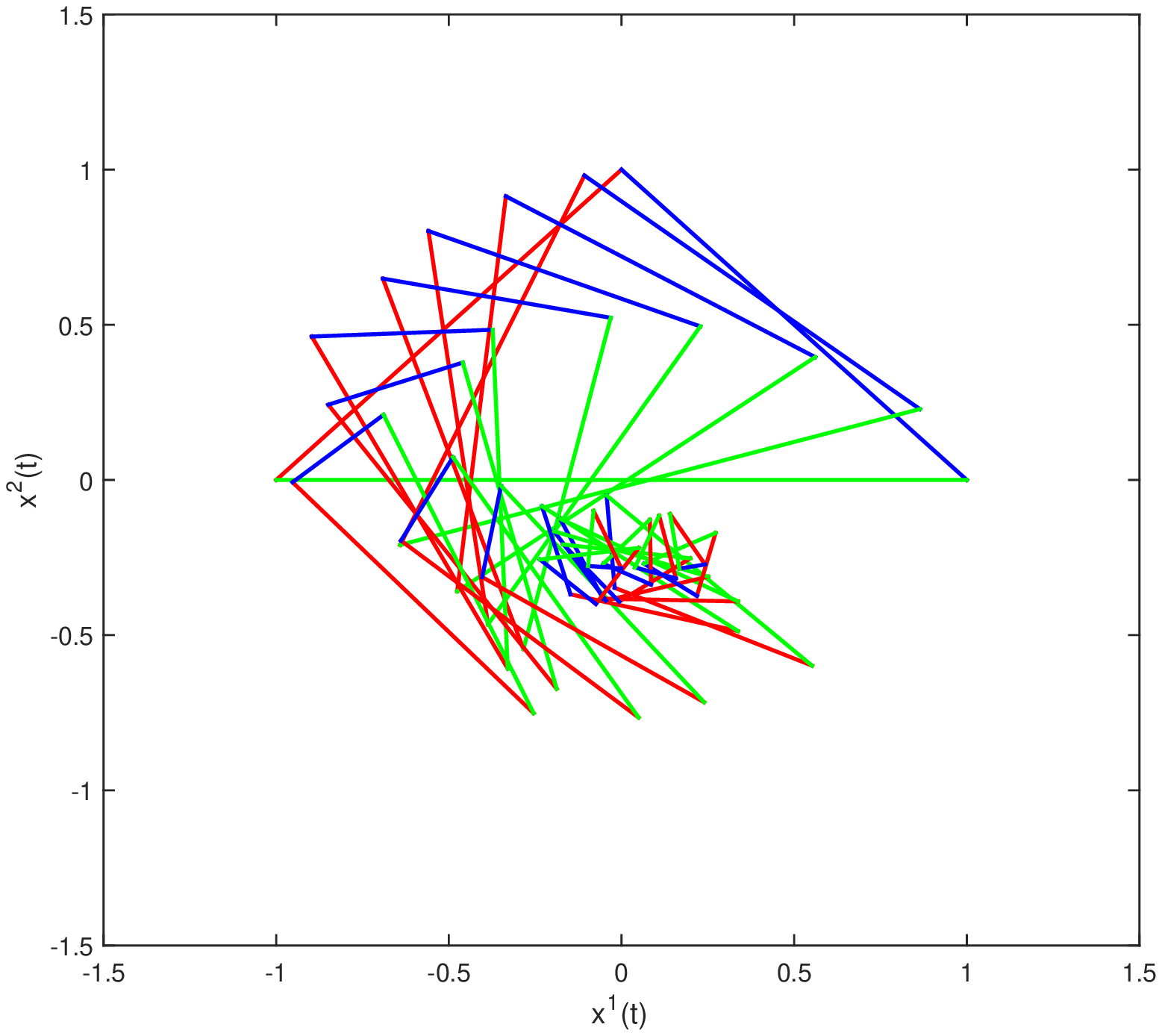}}
  \subfloat[$\nu = 2$]{
   \label{exam-2:fig-3-b}\includegraphics[width=0.45\textwidth]{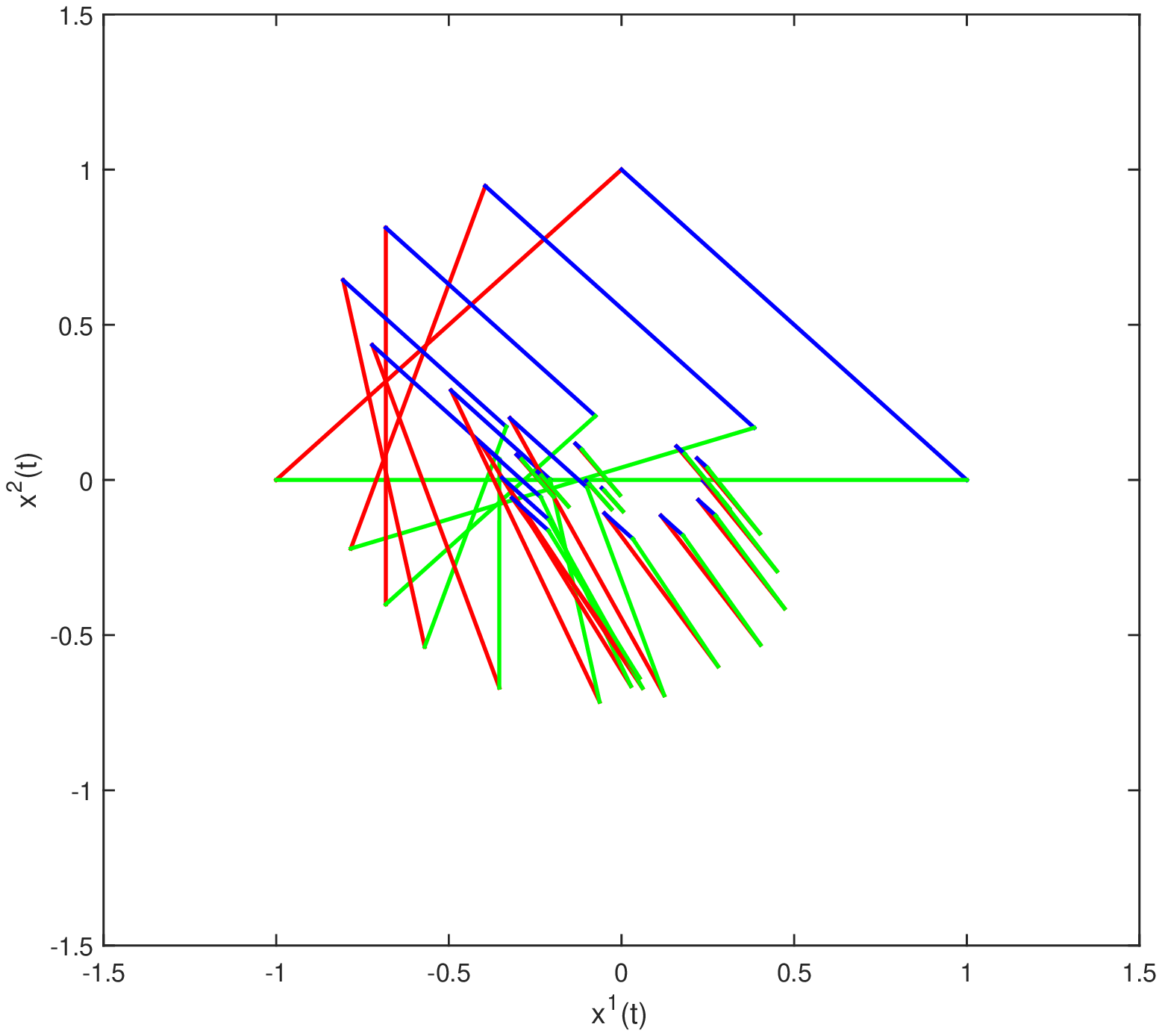}}
  \caption{Evolution of the numerical triangles arising from the SEDG scheme (\ref{exam-2:eq-2}) in the phase space for $\nu=1$ (left) and $\nu=2$ (right). }
  \label{exam-2-fig-3}
\end{figure}
Fig. \ref{exam-2-fig-3} shows the change of the triangles produced by the SEDG scheme (\ref{exam-2:eq-2}) in the phase space. The decay of the areas can be seen clearly, and with the growth of $\nu$ the decay become seemingly faster.  The parameters are the same with those for Fig. \ref{exam-2-fig-2}.

\begin{figure}[htbp]
  \centering
   \includegraphics[width=0.9\textwidth,height=0.45\textwidth]{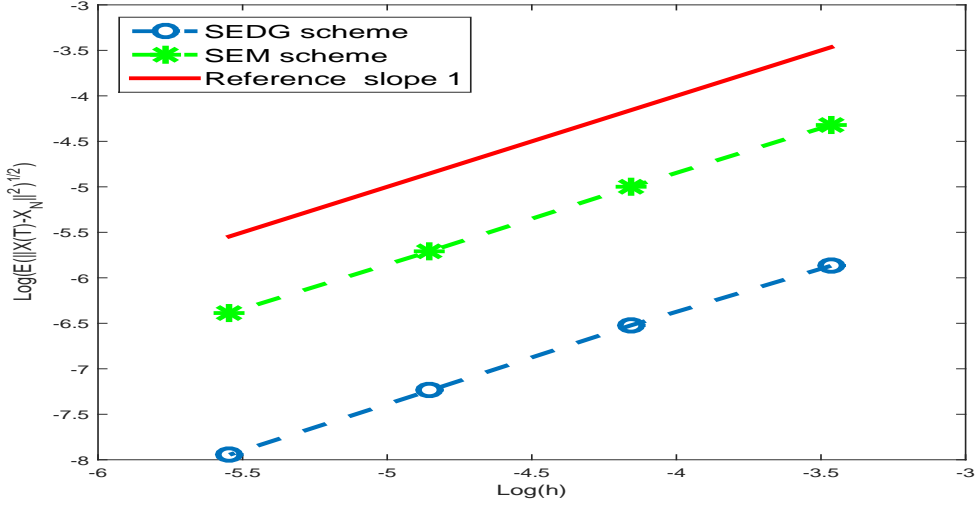}
  \caption{Root mean-square orders of the SEDG scheme (\ref{exam-2:eq-2})  and the SEM scheme for the system (\ref{exam-2:eq-1}).}
  \label{exam-2-fig-4}
\end{figure}

Fig. \ref{exam-2-fig-4} indicates that both the SEDG scheme (\ref{exam-2:eq-2}) and the SEM scheme for the system (\ref{exam-2:eq-1}) have root mean-square convergence order 1, while the error of our SEDG scheme is smaller than that of the SEM scheme. We take $\sigma = 0.3$, and calculate the error at $T = 1$. The initial value is $x_0 = (0,1)^{\mathrm{T}}$, and 1000 trajectories are sampled for approximating the expectation.


\section{Conclusion}
\label{sec:concl}
For SDEs with linear and gradient components in the coefficients, namely the L-G SDEs, we proposed a class of stochastic exponential discrete gradient (SEDG) schemes, and investigated their performance in terms of accuracy and structure-preservation. Theoretical and experimental analysis showed the effectiveness and efficiency of the SEDG schemes. In particular, we demonstrated that the combination of the exponential integrator with the discrete gradient method in our scheme enables us to simulate certain stochastic highly oscillatory systems with satisfactory accuracy, large step sizes, and preservation of certain geometric structures of the systems.
\appendix
\section{Proof of Theorem \ref{struct:thm-3}}
\begin{proof}
  Similar to the proof of  Theorem \ref{const:thm-1}, we need to calculate the $p_1, p_2$ satisfying (\cite{milstein1994numerical})
  \begin{equation}\label{struct-thm-3:eq-1}
   \begin{split}
    | E(X(t_n+h) - X_{n+1}) | & ={} O(h^{p_1}), \\
    (E{| X(t_n+h) - X_{n+1} |}^2)^{\frac{1}{2}} & ={} O(h^{p_2}).
   \end{split}
  \end{equation}

  By (\ref{struct:eq-2})and (\ref{struct:eq-3}), we obtain (suppose $X(t_n) = X_n$)
  \begin{equation}\label{struct-thm-3:eq-2}
    X(t_n+h) - X_{n+1} ={} \bar{P}_{1} + \bar{P}_{2}
  \end{equation}
  where
  \begin{equation*}
   \begin{split}
    \bar{P}_{1} & ={} \int_{t_n}^{t_{n+1}} E_A (s) Q [\nabla U(X(s)) - \bar{\nabla} U(X_n,X_{n+1})] ({\rm d}s + \sigma \circ {\rm d}W(s) ), \\
    \bar{P}_{2} & ={} \left( \int_{t_n}^{t_{n+1}} E_A (s) ({\rm d}s + \sigma \circ {\rm d}W(s) ) - A^{-1}(E_A (t_n) - I) \right) Q \bar{\nabla} U(X_n,X_{n+1}).
   \end{split}
  \end{equation*}

  Further, denote $g=Q Hess(U) (A X + Q \nabla U)$, we have
  \begin{equation}\label{struct-thm-3:eq-3}
  \bar{P}_{1}={} \bar{P}_{3} + \bar{P}_{4} + \bar{P}_{5},
  \end{equation}
  where
  \begin{equation*}
   \begin{split}
   \bar{P}_{3} & ={} (I -\frac{\sigma^2}{2} A) \int_{t_n}^{t_{n+1}} E_A (s) Q [\nabla U(X(s)) - \bar{\nabla} U(X_n,X_{n+1})] {\rm d}s, \\
   \bar{P}_{4} & ={} \sigma \int_{t_n}^{t_{n+1}} E_A (s) Q [\nabla U(X(s)) - \nabla U(X_n)] {\rm d}W(s), \\
   \bar{P}_{5} & ={} \sigma \int_{t_n}^{t_{n+1}} E_A (s) Q [\nabla U(X_n) - \bar{\nabla} U(X_n,X_{n+1})] {\rm d}W(s) \\
   		  & + \frac{\sigma}{2} \int_{t_n}^{t_{n+1}} E_A (s) g(X(s)) {\rm d}s.
   \end{split}
  \end{equation*}
  \begin{equation}\label{struct-thm-3:eq-4}
  \begin{split}
  \bar{P}_{2} ={} & \left( \int_{t_n}^{t_{n+1}} E_A (s) ({\rm d}s + \sigma \circ {\rm d}W(s) ) - A^{-1}(\exp( A h) - I)\right. \\
		 & \left. - \exp(A h) A^{-1}(\exp(\sigma A \Delta W_n ) - I) \right) Q \bar{\nabla} U(X_n,X_{n+1}) \\
  		 ={} & \int_{t_n}^{t_{n+1}} \exp(A (t_{n+1} - s)) \left( \exp(\sigma A ( W(t_{n+1}) - W(s) )) - I \right) {\rm d}s \\
  		 +{} & \sigma \int_{t_n}^{t_{n+1}} \exp(\sigma A ( W(t_{n+1}) - W(s) )) \left( \exp(A (t_{n+1} - s)) - \exp(A h) \right)  {\rm d}W(s) \\
  		 +{} & \frac{\sigma}{2} A \int_{t_n}^{t_{n+1}} \exp(\sigma A ( W(t_{n+1}) - W(s) )) \left( \exp(A h) - \exp(A (t_{n+1} - s)) \right) {\rm d}s \\
  		 =:{} & \bar{P}_{6} + \bar{P}_{7} + \bar{P}_{8}.
  \end{split}
  \end{equation}
 \begin{equation}\label{struct-thm-3:eq-5}
  \begin{split}
   \bar{P}_{3} & ={} (I -\frac{\sigma^2}{2} A) \int_{t_n}^{t_{n+1}} E_A (s) Q [\nabla U(X(s)) - \nabla U(X_n)] {\rm d}s \\
   		  & + (I -\frac{\sigma^2}{2} A) \int_{t_n}^{t_{n+1}} E_A (s) Q [\nabla U(X_n) - \bar{\nabla} U(X_n,X_{n+1})] {\rm d}s \\
   		  & =:{} \bar{P}_{9} + \bar{P}_{10}.
  \end{split}
  \end{equation}
  \begin{equation}\label{struct-thm-3:eq-6}
   \bar{P}_{4} ={} \sigma \int_{t_n}^{t_{n+1}} E_A (s) g(X_n) \left( \int_{t_n}^{s} {\rm d}W(t) \right) {\rm d}W(s) + R_{\bar{P}_{4}}. 		
  \end{equation}

  We use the expansion of the SDG $\bar{\nabla} U(X_n,X_{n+1})$ to get
  \begin{equation}\label{struct-thm-3:eq-7}
  \begin{split}
   \bar{P}_{5} & ={} \frac{\sigma}{2} \int_{t_n}^{t_{n+1}} E_A (s) [g(X(s)) - g(X_n)] {\rm d}s
   		   + \frac{\sigma}{2} \int_{t_n}^{t_{n+1}} E_A (s) g(X_n) {\rm d}s \\
   		  & - \frac{\sigma}{2} \int_{t_n}^{t_{n+1}} E_A (s) g(X_n) \Delta W_n {\rm d}W(s) +  R_{\bar{P}_{5}}.
   \end{split}
  \end{equation}
  Then by the definition of matrix exponentials, we obtain
  \begin{equation}\label{struct-thm-3:eq-8}
  \bar{P}_{4} +\bar{P}_{5}= \bar{P}_{11} + \bar{P}_{12} + R,
  \end{equation}
  where $R$ is the sum of higher order terms of the matrix exponential as well as $R_{\bar{P}_{4}}$ and $\bar{R}_{P_{5}}$, and
  \begin{equation*}
   \begin{split}
    \bar{P}_{11} & ={} \frac{\sigma}{2} \int_{t_n}^{t_{n+1}} E_A (s) [g(X(s)) - g(X_n)] {\rm d}s, \\
    \bar{P}_{12} & ={} \sigma g(X_n) \left( \int_{t_n}^{t_{n+1}} \left( \int_{t_n}^{s} {\rm d}W(t) \right) {\rm d}W(s) + \frac{1}{2} ( h - (\Delta W_n)^2) \right).
   \end{split}
  \end{equation*}
  It is easy to see that $\bar{P}_{12} = 0$.

  Based on the triangular inequality and the H\"older inequality, we derive that
  \begin{equation}\label{struct-thm-3:eq-9}
  \begin{split}
  | E(X(t_n+h) - X_{n+1}) | & \leq |E \bar{P}_{6} | + |E \bar{P}_{7} | + |E\bar{P}_{8} | \\
  				 & + |E \bar{P}_{9} | + |E \bar{P}_{10} | + |E \bar{P}_{11} | + |E R |, \\
  E{| X(t_n+h) - X_{n+1} |}^2 & \leq 7( E |\bar{P}_{6}|^2 + E |\bar{P}_{7}|^2  + E |\bar{P}_{8}|^2  \\
  				 & + E| \bar{P}_{9}|^2  + E |\bar{P}_{10}|^2 + E |\bar{P}_{11}|^2 + E |R|^2 ).
  \end{split}
  \end{equation}
  By properties of the Wiener process and the assumptions in Theorem \ref{struct:thm-3}, we obtain the estimate
  \begin{equation}\label{struct-thm-3:eq-10}
   | E(X(t_n+h) - X_{n+1}) | = O(h^2).
  \end{equation}
  Again, it should be noted that the part $R$ is of higher order than other terms.

  Using the H\"older inequality and the Bunyakovsky-Schwarz inequality, under the assumption that $| (I -\frac{\sigma^2}{2} A) Q | \leq m$ for a certain $m>0$, we have
  \begin{equation}\label{struct-thm-3:eq-11}
   \bar{P}_{9} = (I -\frac{\sigma^2}{2} A) Q \int_{t_n}^{t_{n+1}} \nabla U(X(s)) - \nabla U(X_n) {\rm d}s + R_{\bar{P}_{9}},
  \end{equation}
  \begin{equation}\label{struct-thm-3:eq-12}
   \begin{split}
    |\bar{P}_{9}|^2 & \leq 2 m^2 {\left| \int_{t_n}^{t_{n+1}} \nabla U(X(s)) - \nabla U(X_n) {\rm d}s \right|}^2 + 2 |R_{\bar{P}_{9}}|^2 \\
    		       & \leq 2 h m^2 \int_{t_n}^{t_{n+1}} {|\nabla U(X(s)) - \nabla U(X_n)|}^2 {\rm d}s + 2 |R_{\bar{P}_{9}}|^2 \\
		       & \leq 2 h m^2 L^2 \int_{t_n}^{t_{n+1}} {| X(s) - X_n |}^2 {\rm d}s + 2 |R_{\bar{P}_{9}}|^2, \
   \end{split}
  \end{equation}
where $L$ is the Lipschitz constant for $\nabla U$.

 We can similarly estimate the second moment of $\bar{P}_{11}$, and the second moments of $\bar{P}_{6}$, $\bar{P}_{7}$, $\bar{P}_{8}$, $\bar{P}_{10}$ can be estimated in a straightforward way. Thus, we can obtain
  \begin{equation}\label{struct-thm-3:eq-13}
   E{| X(t_n+h) - X_{n+1} |}^2 = O(h^3).
  \end{equation}

  Finally, we conclude that $p_1 = 2$, $p_2 = \frac{3}{2}$ and the root mean-square order of the SEDG scheme (\ref{struct:eq-3}) for the system (\ref{intro:poisson}) is $p = p_2 - \frac{1}{2} = 1$.
\end{proof}



\end{document}